\newcommand{\nomgn}[1]{}
\newcommand{\mylabel}[2]{#2\def\@currentlabel{#2}\label{#1}}
\newcommand{\abs}[1]{\ensuremath{\left\vert#1\right\vert}}
\newcommand{\Z}{\mathbb Z}
\newcommand{\Q}{\mathbb Q}
\newcommand{\R}{\mathbb R}
\newcommand{\F}{\mathbb F}
\newcommand{\fp}{\mathfrak p}
\newcommand{\PP}{\mathcal P}
\newcommand{\NN}{\mathcal N}
\DeclareMathOperator{\li}{li}
\DeclareMathOperator{\GL}{GL}
\DeclareMathOperator{\Gal}{Gal}
\DeclareMathOperator{\id}{id}
\newcommand{\No}{\mathrm N}
\renewcommand{\subset}{\subseteq}
\renewcommand{\supset}{\supseteq}
\newtheoremstyle{theoremdd}
  {10pt}
  {10pt}
  {\itshape}
  {0em}
  {\bfseries}
  {.}
  { }
  {}%
\theoremstyle{theoremdd}
\newtheorem{theorem}[equation]{Theorem}
\newtheorem{corollary}[equation]{Corollary}
\newtheorem{proposition}[equation]{Proposition}
\newtheorem{lemma}[equation]{Lemma}
\newtheorem*{lemma*}{Lemma}
\theoremstyle{definition}
\newcommand{\Dk}{{D_k}}
\title{Artin's Conjecture for Abelian Varieties with Frobenius Condition}
\author{Florian Hess\\ University of Oldenburg, Germany \\ \href{florian.hess@uol.de}{florian.hess@uol.de} \and Leonard Tomczak \\ University of Cambridge, UK \\ \href{leonard.tomczak00@gmail.com}{leonard.tomczak00@gmail.com}} 
\date{\today}
\begin{document}

\maketitle

\begin{abstract}
  $A$ be an abelian variety over a number field $K$ of dimension $r$,
  $a_1, \dots, a_g \in A(K)$ and $F/K$ a finite Galois extension.  We
  consider the density of primes $\fp$ of $K$ such that the quotient
  $\bar{A}(k(\fp))/\langle \bar{a}_1,\dots,\bar{a}_g\rangle$ has at
  most $2r-1$ cyclic components and $\fp$ satisfies a Frobenius
  condition with respect to $F/K$, where $\bar{A}$ is the reduction of
  $A$ modulo $\fp$, $k(\fp)$ is the residue class field of $\fp$ and
  $\langle \bar{a}_1,\dots,\bar{a}_g\rangle$ is the subgroup generated
  by the reductions $\bar{a}_1,\dots,\bar{a}_g$. We develop a general
  framework to prove the existence of the density under the
  Generalized Riemann Hypothesis.
\end{abstract}

\section{Introduction}
Let $a\ne-1$ be a non-square integer. Artin's conjecture on primitive roots asserts that the density of the set of primes $p$ for which $a$ is a primitive root exists and is positive. In \cite{HooleyArtinConjecture} Hooley proved the conjecture assuming GRH. In the following we will discuss analogues of this conjecture for elliptic curves and more generally abelian varieties.

Let $E$ be an elliptic curve defined over $\Q$. The immediate analogue of Artin's conjecture would be: Given a point $a\in E(\Q)$ does the set of primes $p$ for which the reduction $\overline{a}$ generates $\overline{E}(\F_p)$ have a density and is it positive? One difference between these questions is that $\F_p^\times$ is always cyclic, and this does not hold for $\overline{E}(\F_p)$. Thus, we can take a step back and ask: Does the set of primes~$p$ for which $\overline{E}(\F_p)$ is cyclic have a density and is it positive? This was analyzed in~\cite{CojocaruMurtyElliptic}. In \cite{AkbalElliptic} the authors also include a congruence condition on the primes to be counted.

Generalizing the question of cyclicity to abelian varieties $A/\Q$ of dimension $r\geq1$ it turns out in \cite{VirdolArtinAV2} that the closest analogue would be to count those primes $p$ for which $\overline{A}(\F_p)$ is the product of at most~$2r-1$ cyclic groups. As with Artin's conjecture we can also consider a finite set of global points $\{a_1,\dots,a_g\}\subset A(\Q)$ and look for those primes for which the $a_1,\dots,a_g$ generate a sufficiently large subgroup in the reduction of $A$. This has been done in \cite{VirdolArtinAV1}.

All of these papers use similar methods for which we develop a general framework. We will then apply this to the concrete situation of abelian varieties and obtain a generalization of the results in \cite{VirdolArtinAV1} and \cite{AkbalElliptic}.

\section{Main Results}\label{section:results}
Let $A$ be an abelian variety defined over a number field $K$ and
$a=\{a_1,\dots,a_g\}$ a set of $K$-rational points. If $k \geq 1$ we let
$L_k=K(A[k],k^{-1}a)$ be the field obtained by adjoining the
coordinates of the $k$-torsion points of $A$ and the coordinates of
the $k$-division points of the $a_i$ to $K$. Then $L_k/K$ is a finite
Galois extension with $\zeta_k \in L_k$, where $\zeta_k$ denotes a
primitive $k$-th root of unity.

Following \cite{VirdolCyclicComp} we say that an abelian group
\textit{has at most $n$ cyclic components} if it can be written as a
product of $n$ cyclic groups, and we say
that $a$ is \textit{primitive-cyclic} modulo some finite prime $\fp$
of $K$ if $\bar{A}(k(\fp))/\langle \bar{a}_1,\dots,\bar{a}_g\rangle$
has at most $2r-1$ cyclic components where $r=\dim A$.

Given a set of finite primes $\PP$ of a number field $K$ and $x\in\R$ we let $$\PP(x):=\#\{\fp\in\PP\mid \No\fp\leq x\}.$$ 

Let $F$ denote a finite Galois extension of $K$. The set of elements $\sigma\in\Gal(L/K)$ fixing a prime of $L$ lying over $\fp$ and inducing the Frobenius automorphism on the corresponding residue field extension will be denoted by $(\fp,L/K)$. 
Let $C_F\subset \Gal(F/K)$ be closed under conjugation.

Our main result is:

\begin{theorem}\label{theorem:av_main_theorem2_extra_condition}
Assuming GRH for the Dedekind zeta functions of the fields $L_kF$, the set $\PP_{C_F}$ of primes $\fp$ of $K$ for which $a$ is primitive-cyclic modulo $\fp$ and which satisfy $(\fp,F/K)\cap C_F\ne\emptyset$, has natural density $$\mathfrak f_{C_F}=\sum_{k = 1}^\infty \mu(k)\delta_{C_F,k} \;\; \text{ where } \;\; \delta_{C_F,k}:=\frac{\#\{\sigma\in\Gal(L_kF/K)\mid \sigma\rvert_F\in C_F,\sigma\rvert_{L_k}=\id_{L_k}\}}{[L_kF:K]},$$ $\mu$ denotes the Möbius function and the series is absolutely convergent. More precisely
$$\PP_{C_F}(x)=\mathfrak f_{C_F}\li(x)+O(x^{5/6}(\log x)^{2/3}).$$
\end{theorem}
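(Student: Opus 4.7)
The plan follows Hooley's classical treatment of Artin's conjecture, augmented to handle abelian varieties and the Chebotarev condition on $F/K$. First I would translate the primitive-cyclic property into a splitting condition: $a$ fails to be primitive-cyclic modulo $\fp$ if and only if there exists a rational prime $\ell$ such that $\fp$ splits completely in $L_\ell/K$; this is the criterion underlying Virdol's work. M\"obius inversion over squarefree $k$ then yields
$$\PP_{C_F}(x)=\sum_{k\geq 1}\mu(k)\,N_k(x),$$
where $N_k(x)$ counts primes $\fp$ of $K$ with $\No\fp\leq x$ that split completely in $L_k$ and also satisfy $(\fp,F/K)\cap C_F\neq\emptyset$. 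The two conditions combined form a single Chebotarev condition on $L_kF/K$ with respect to the conjugation-closed subset
$$C_k=\{\sigma\in\Gal(L_kF/K):\sigma\rvert_{L_k}=\id_{L_k},\ \sigma\rvert_F\in C_F\},$$
whose density is exactly $|C_k|/[L_kF:K]=\delta_{C_F,k}$.

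I would then introduce a truncation parameter $y=y(x)$ (ultimately $\asymp x^{1/6}$) and split
$$\PP_{C_F}(x)=\sum_{k\leq y}\mu(k)N_k(x)+\sum_{k>y}\mu(k)N_k(x).$$
For $k\leq y$ the Lagarias--Odlyzko--Serre effective Chebotarev density theorem, applied under GRH to $L_kF/K$, gives
$$N_k(x)=\delta_{C_F,k}\li(x)+O\bigl(|C_k|^{1/2}x^{1/2}(\log\disc(L_kF)+[L_kF:\Q]\log x)\bigr).$$
Using $|C_k|\leq [F:K]$ and the standard conductor--discriminant estimate for $L_kF$ in terms of primes dividing $k\cdot\disc A\cdot\disc F$, summing over $k\leq y$ produces a partial main term $\li(x)\sum_{k\leq y}\mu(k)\delta_{C_F,k}$ together with an aggregate Chebotarev error whose size is governed by $\sum_{k\leq y}[L_k:K]\cdot x^{1/2}\log x$.

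The main obstacle is the tail $\sum_{k>y}N_k(x)$. Any $\fp$ split in $L_k$ has $\No\fp\equiv 1\pmod k$ since $\zeta_k\in L_k$, so $\No\fp\geq k$; combined with a Brun--Titchmarsh-type upper bound (or a second, crude application of effective Chebotarev), this yields $N_k(x)\ll x/([L_k:K]\log(x/k))$. To make this collapse into the claimed error one needs a uniform lower bound of the form $[L_k:K]\gg k^{1+\eta}$ for some $\eta>0$, which for abelian varieties comes from Serre-type open-image results on the mod-$k$ Galois representation---presumably this is precisely what the paper's general framework packages. The same lower bound secures the absolute convergence of the series $\sum_k\mu(k)\delta_{C_F,k}$ and controls the difference between the full sum $\mathfrak f_{C_F}$ and its truncation at $y$ by $O(x/y^\eta)$.

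Finally, optimizing $y$ to balance the Chebotarev error against the tail bound produces the stated $O(x^{5/6}(\log x)^{2/3})$; the cutoff $y\asymp x^{1/6}(\log x)^{-1/3}$ is dictated by this optimization. The hardest point throughout is the tail estimate: one must push $y$ as high as $x^{1/6}$ while still retaining sufficient savings from the lower bound on $[L_k:K]$, and then verify that every auxiliary estimate (degrees, discriminants, zero-free regions of the $L_kF$) is uniform enough in $k$ to survive the summation.
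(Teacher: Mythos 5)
Your overall architecture --- translating primitive-cyclicity into complete splitting in the division fields $L_q$, M\"obius inversion over squarefree $k$, truncation at $y(x)$, and effective Chebotarev under GRH for $L_kF$ on the range $k\leq y$ --- is exactly the paper's. The genuine gap is the tail $\sum_{k>y}N_k(x)$, which you correctly single out as the hardest point but then dispose of with a bound that does not hold up. Brun--Titchmarsh applied to the condition $\No\fp\equiv 1\pmod{k}$ (the only arithmetic information your argument extracts from $\zeta_k\in L_k$) saves a factor $\phi(k)\asymp k$, not $[L_k:K]$, and the resulting tail $\sum_{y<k\leq 2\sqrt{x}}x/(\phi(k)\log(x/k))$ is of order $x$, far too large. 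A ``second, crude application of effective Chebotarev'' does not help either: since $\delta_k n_k\gg 1$ here, the GRH error term for each individual $k$ is already $\gg\sqrt{x}\log x$, and summing it over $k\leq 2\sqrt{x}$ gives $\gg x\log x$ --- this is precisely the obstruction the whole argument must circumvent, and no unconditional Chebotarev upper bound with the full $1/[L_k:K]$ saving is available. Moreover, even granting your bound $N_k(x)\ll x/([L_k:K]\log(x/k))$, the known lower bound is only $[L_k:K]\gg k^{3/2}$ (so $\eta=1/2$), and with your cutoff $y\asymp x^{1/6}$ the tail would still be $\asymp x/(y^{1/2}\log x)=x^{11/12}/\log x$, worse than the claimed $x^{5/6}(\log x)^{2/3}$.

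What actually closes the argument (following Cojocaru--Murty and Virdol) is an elementary counting estimate that you do not supply: if $\fp$ splits completely in $L_k$ then $k\mid\No\fp-1$, $k^2\mid r\No\fp+r-a_\fp$, and hence $k\mid a_\fp-2r$, where $a_\fp$ is the trace of Frobenius. Fixing the trace $a$ and counting primes with $k^2\mid r\No\fp+r-a$ gives $\#S_a(k)\ll x/k^2$; summing over the $O(\sqrt{x}/k)$ admissible traces $a\neq 2r$ with $k\mid a-2r$, plus the single term $a=2r$, yields $\NN_k(x)\ll x^{3/2}/k^3+x/k^2$ uniformly in $k$. With this bound the correct cutoff is $y(x)=(x/\log x)^{1/3}$, not $x^{1/6}$: one balances the accumulated Chebotarev error $\sqrt{x}(\log x)\,y$ against the tail $x^{3/2}/y^2$, which is what produces the exponent $5/6$. (Two smaller points: the Chebotarev error term should carry the factor $\#C/\#G$ rather than $\#C^{1/2}$, and the equivalence ``primitive-cyclic iff $\fp$ splits in no $L_q$'' needs the observation that $\fp$ automatically ramifies, hence does not split, in $L_p$ for $p$ the residue characteristic.)
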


We obtain previous results as special cases:
\begin{itemize}
\item[1).] $a=\emptyset, F=K$. This is \cite[Theorem 1.1]{VirdolCyclicComp}.
\item[2).] $F=K=\Q$. This is \cite[Theorem 1.1]{VirdolArtinAV1} with a better error term in most cases.
\item[3).] $A$ elliptic curve, $K=\Q, F=\Q(\zeta_f)$. This is \cite[Theorem 3]{AkbalElliptic}.
\end{itemize}%

Instead of assuming GRH it turns out that the density statement of the theorem, but not the asymptotic formula, can be deduced from the special case $F=K$, i.e. imposing no extra condition on the primes. This is an analogue of \cite[Lemma 3.2]{LenstraArtinConjecture}:

\begin{theorem}\label{theorem:av_unconditional_implication}
Suppose that the set $\PP_{C_K}$ has density $\mathfrak f_{C_K}=\sum_{k=1}^\infty \mu(k)\delta_{C_K,k}$ where $C_K=\{\id_K\}$ and $\delta_{C_K,k}=\frac{1}{[L_k:K]}$. Then $\PP_{C_F}$ has density $\mathfrak f_{C_F}=\sum_{k=1}^\infty \mu(k)\delta_{C_F,k}$. 
\end{theorem}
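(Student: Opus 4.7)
The strategy is a Möbius truncation argument comparing the conditional setting $(F,C_F)$ with the unconditional one $(K,\{\id_K\})$, whose density is given by hypothesis. Recall the standard equivalence (to be established alongside Theorem~\ref{theorem:av_main_theorem2_extra_condition}): $a$ is primitive-cyclic modulo $\fp$ if and only if $\fp$ does not split completely in $L_\ell$ for any prime $\ell$. For a parameter $Y\ge 1$ introduce the truncated sets
\[
\PP_{C_F,Y}:=\bigl\{\fp : (\fp,F/K)\cap C_F\neq\emptyset \text{ and } \fp \text{ does not split completely in } L_\ell \text{ for any prime } \ell\le Y\bigr\},
\]
and define $\PP_{C_K,Y}$ analogously (with the Frobenius condition becoming vacuous). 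Evidently $\PP_{C_F}\subseteq\PP_{C_F,Y}$ and $\PP_{C_K}\subseteq\PP_{C_K,Y}$.

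Absolute convergence of $\sum_k\mu(k)\delta_{C_F,k}$ is the first thing to check. Every $\sigma\in\Gal(L_kF/K)$ with $\sigma\rvert_{L_k}=\id$ belongs to $\Gal(L_kF/L_k)$, a group of order $[L_kF:L_k]$, so $\delta_{C_F,k}\le [L_kF:L_k]/[L_kF:K]=1/[L_k:K]$. Absolute convergence then follows from that of $\sum_k\mu(k)/[L_k:K]$, which is built into the hypothesis defining the finite real number $\mathfrak f_{C_K}$.

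Next, Möbius inclusion--exclusion applied to the events ``$\fp$ splits completely in $L_\ell$'' for $\ell\le Y$ (using $L_{\ell_1}L_{\ell_2}=L_{\ell_1\ell_2}$ for coprime $\ell_i$), combined with Chebotarev's theorem applied to the finitely many Galois extensions $L_kF/K$ with $k\mid\prod_{\ell\le Y}\ell$, yields
\[
\lim_{x\to\infty}\frac{\PP_{C_F,Y}(x)}{\li(x)}=\sum_{k\mid\prod_{\ell\le Y}\ell}\mu(k)\,\delta_{C_F,k},
\]
together with the analogous statement with $C_K$ and $1/[L_k:K]$ in place of $C_F$ and $\delta_{C_F,k}$. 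Letting $Y\to\infty$, these right-hand sides tend to $\mathfrak f_{C_F}$, respectively $\mathfrak f_{C_K}$. Since $\PP_{C_F}\subseteq\PP_{C_F,Y}$, we conclude directly that $\limsup_{x\to\infty}\PP_{C_F}(x)/\li(x)\le\mathfrak f_{C_F}$.

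The heart of the argument, and the only non-routine step, is the elementary set-theoretic inclusion
\[
\PP_{C_F,Y}\setminus\PP_{C_F}\ \subseteq\ \PP_{C_K,Y}\setminus\PP_{C_K},
\]
which holds because each difference consists exactly of primes failing to split in $L_\ell$ for all $\ell\le Y$ but splitting in some $L_{\ell'}$ with $\ell'>Y$, the Frobenius condition on the left being merely an additional restriction preserved under this difference. Hence $0\le\PP_{C_F,Y}(x)-\PP_{C_F}(x)\le\PP_{C_K,Y}(x)-\PP_{C_K}(x)$, and combining the hypothesis on $\PP_{C_K}$ with the Chebotarev limit for $\PP_{C_K,Y}$ gives
\[
\limsup_{x\to\infty}\frac{\PP_{C_F,Y}(x)-\PP_{C_F}(x)}{\li(x)}\le\sum_{k\mid\prod_{\ell\le Y}\ell}\frac{\mu(k)}{[L_k:K]}-\mathfrak f_{C_K},
\]
which tends to $0$ as $Y\to\infty$. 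Together with the Chebotarev computation for $\PP_{C_F,Y}$ this produces the matching lower bound $\liminf_{x\to\infty}\PP_{C_F}(x)/\li(x)\ge\mathfrak f_{C_F}$, finishing the proof. The main obstacle is entirely conceptual: identifying the truncation and the above inclusion correctly; once both are in place the conclusion is a routine double limit, and in particular no effective form of Chebotarev (and no GRH) is required, only qualitative density for each finite extension $L_kF/K$ with $k$ squarefree supported on primes $\le Y$.
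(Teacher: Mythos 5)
Your argument is correct, and it diverges from the paper's at the decisive step. The paper proves this theorem by translating it into the general framework of Section~\ref{section:general_ideas} and invoking Theorem~\ref{lemma:add_frobenius_condition}, whose proof is Lenstra's complement trick: one first shows $d_{\sup}(\PP_{C})\le\mathfrak f_{C}$ for every class $C$ (this is your upper bound; it is Lemma~\ref{lemma:add_frobenius_condition_helper}, resting on the finite-level Chebotarev computation of Lemma~\ref{lemma::densitycomplementformula}), then applies this to both $C_F$ and $\overline{C_F}=\Gal(F/K)\setminus C_F$ and uses $\mathfrak f=\mathfrak f_{C_F}+\mathfrak f_{\overline{C_F}}$ together with $\liminf(a_n+b_n)\le\liminf a_n+\limsup b_n$ to extract the lower bound $d_{\inf}(\PP_{C_F})\ge\mathfrak f_{C_F}$. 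You share the truncation at level $Y$ and the upper bound, but replace the complement trick by the monotone tail comparison $\PP_{C_F,Y}\setminus\PP_{C_F}\subseteq\PP_{C_K,Y}\setminus\PP_{C_K}$, letting the hypothesis annihilate the right-hand side as $Y\to\infty$. Your version is more direct and isolates exactly what the hypothesis buys, namely the vanishing upper density of the set of primes that split completely in some $L_{\ell'}$ with $\ell'>Y$ but in no $L_\ell$ with $\ell\le Y$; the paper's version has the advantage of working verbatim for arbitrary conjugation-stable classes $C_q$ in the general framework, where no such monotone comparison with a reference family is available. One point to tighten: absolute convergence of $\sum_k\mu(k)/[L_k:K]$ is not ``built into'' the mere convergence asserted in the hypothesis, and you genuinely need it, because your $Y\to\infty$ limits run over the divisor lattice of $\prod_{\ell\le Y}\ell$ rather than over initial segments of $\N$. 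It does hold here, unconditionally, because $[L_k:K]\gg k^{3/2}$, as recorded in Section~\ref{section:proofs}; with that one line supplied, your proof is complete.
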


One remaining question is to analyze when the density $\mathfrak f_{C_F}$ is positive. It is easy to see that in order for $\mathfrak f_{C_F}>0$ to hold it is necessary that $L_q\ne K$ for all primes $q$ and $C_F\ne\emptyset$ (see e.g.\ Proposition \ref{proposition:density_nonnegative}). We believe that for $F=K$ this is also sufficient. Proving this would involve a delicate analysis of the independence of the division fields $L_q$ and suitable subextensions to apply Lemma \ref{lemma:density_bound} as done in \cite[Section 6]{CojocaruMurtyElliptic} in special cases. We will not cover this here.

\section{General Framework}\label{section:general_ideas}
In this and the following section we will write $f\ll g$ or $f=O(g)$ for some functions $f,g$ that may depend on real parameters $k,x$ if there are constants $A,B>0$ such that $|f| \leq B g$ whenever $k,x>A$. Moreover, $f = o(g)$ if for every $\varepsilon > 0$ there is a constant $A>0$ such that $|f|\leq \varepsilon g$ whenever~$k,x>A$. We also write $f(x) \sim g(x)$ if $f(x)/g(x) \to 1$ for $x \to \infty$.

All primes will be finite primes.

\subsection{Density Computation}

The main tool to prove that our sets of primes have certain densities will be an effective version of Chebotarev's density theorem:

\begin{theorem}[{\cite[p.\ 133, Theorem 4]{SerreChebotarev}}]\label{theorem:effective_chebotarev}
Let $L/K$ be a finite Galois extension of number fields and $C\subset\Gal(L/K)$ closed under conjugation. Assuming GRH for the Dedekind zeta function of $L$ we have $$\PP(x)=\frac{\#C}{\#G}\li(x)+O\left(\frac{\#C}{\#G}x^{1/2}(\log d_L+[L:\Q]\log x)\right)$$ where the $O$-constant is absolute and $\PP=\{\fp\subset K\mid\text{$\fp$ unramified in $L$ and $(\fp,L/K)\subset C$}\}$.
\end{theorem}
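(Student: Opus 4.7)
The plan is to follow the analytic approach of Lagarias--Odlyzko via Artin L-functions. First I would pass from the prime-counting function $\PP(x)$ to the Chebyshev-weighted version $\psi_C(x)$ obtained by summing $\log\No\fp$ over prime powers $\fp^m$ with $\No\fp^m \leq x$ and $(\fp, L/K)^m \subset C$. Partial summation converts a bound on $\psi_C(x) - \frac{\#C}{\#G} x$ into one on $\PP(x) - \frac{\#C}{\#G}\li(x)$ at the cost of a factor of $\log x$, and the contributions of ramified primes together with proper prime powers are small enough to be absorbed in the stated error term.

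Second, by orthogonality of characters one writes $\mathbf 1_C = \frac{1}{\#G}\sum_\chi \overline{\chi(C)}\,\chi$, where $\chi$ ranges over irreducible characters of $G = \Gal(L/K)$ and $\chi(C) := \sum_{g \in C} \chi(g)$. This yields
$$\psi_C(x) = \frac{1}{\#G} \sum_\chi \overline{\chi(C)}\, \psi(x, \chi),$$
where $\psi(x,\chi)$ is the Chebyshev function attached to the Artin L-function $L(s,\chi)$. The trivial character contributes the main term $\frac{\#C}{\#G} x$. For each non-trivial $\chi$, the truncated explicit formula combined with GRH (all non-trivial zeros of $L(s,\chi)$ on $\Re s = 1/2$) gives
$$|\psi(x,\chi) - \delta_\chi x| \ll x^{1/2}\bigl(\log A_\chi + n_\chi \log x\bigr) \log x,$$
with $A_\chi$ the analytic conductor and $n_\chi = \chi(1)$ the dimension. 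Summing over $\chi$ via the conductor-discriminant formula $\log d_L = \sum_\chi n_\chi \log A_\chi$ together with $\sum_\chi n_\chi^2 = \#G$ collapses the total error into the claimed bound proportional to $\frac{\#C}{\#G} x^{1/2}(\log d_L + [L:\mathbb Q]\log x)$.

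The main obstacle is that Artin's holomorphy conjecture is not known, so one cannot apply GRH directly to each individual $L(s,\chi)$. The standard remedy is Brauer's induction theorem: each irreducible $\chi$ is a $\mathbb Z$-linear combination of characters induced from one-dimensional characters of subgroups $H \leq G$. Hence $L(s,\chi)$ factors as a quotient of products of Hecke L-functions over the fixed fields $L^H$, which are known to be entire, and GRH for $\zeta_L$ implies GRH for each of these Hecke L-functions. Running the analytic estimates on the Hecke side and reassembling via the conductor-discriminant formula then yields the claimed uniform dependence on $d_L$ and $[L:\mathbb Q]$.
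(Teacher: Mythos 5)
The paper does not prove this statement; it is quoted verbatim from Serre (who in turn follows Lagarias--Odlyzko), so your sketch can only be measured against the standard literature proof. Your overall architecture is the right one: reduce to a Chebyshev-type sum, decompose the indicator of $C$ by orthogonality of characters, apply the explicit formula under GRH to each non-trivial $L$-function, and collapse the error via the conductor--discriminant formula. The first two steps and the final collapse are exactly as in Lagarias--Odlyzko, and your treatment of ramified primes and prime powers is fine.

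The genuine gap is in your remedy for the failure of Artin's holomorphy conjecture. Brauer induction does let you write $L(s,\chi)=\prod_i L(s,\psi_i)^{n_i}$ as a quotient of Hecke $L$-functions, and GRH for $\zeta_L$ does propagate to those Hecke $L$-functions (via Aramata--Brauer, their zeros sit among the zeros of $\zeta_L$). But the resulting bound is
$|\psi(x,\chi)-\delta_\chi x|\leq\sum_i|n_i|\,|\psi(x,\psi_i)-\delta_i x|$, and neither $\sum_i|n_i|$ nor $\sum_i|n_i|\log A_{\psi_i}$ is controlled by $\log A_\chi$: the Brauer coefficients are integers of both signs with no absolute bound, and the conductor--discriminant formula $\log d_L=\sum_\chi\chi(1)\log A_\chi$ applies only to the canonical factorization of $\zeta_L$ into irreducible Artin $L$-functions with the positive exponents $\chi(1)$, not to an arbitrary virtual decomposition. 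So your final ``reassembling'' step does not go through, and the constant you obtain depends on the group $G$ rather than being absolute as the theorem asserts. The standard fix is different: for a single conjugacy class $C$ with representative $g$, set $H=\langle g\rangle$ and $E=L^H$, and use the classical identity expressing $\PP(x)$ (up to ramified primes and prime powers) as $\frac{\#C}{\#G}\sum_{\psi}\overline{\psi(g)}\,\pi(x,\psi,L/E)$ over the characters $\psi$ of the \emph{abelian} group $H$. Every $L(s,\psi,L/E)$ is then a genuine Hecke $L$-function of $E$, each appears with a coefficient of modulus $1$, and the conductor--discriminant formula for $L/E$ gives $\sum_\psi\log A_\psi\ll\log d_L$ together with $[E:\Q]\cdot\#H=[L:\Q]$, which is what produces the stated error term with an absolute constant. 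With that substitution (and summing over the conjugacy classes contained in $C$), your argument becomes the standard proof.
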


To bound the discriminant appearing in the error term we shall make use of the following lemma:

\begin{lemma}[{\cite[p.\ 129, Proposition 5]{SerreChebotarev}}]\label{lemma:discriminant_bound}
Let $E/K$ be a Galois extension of degree $n$ of number fields and write $P(E)$ for the set consisting of those rational primes lying under primes of $K$ ramifying in $E$. Then: $$\log d_E\leq n\log d_K+[E:\Q]\left(1-\frac{1}{n}\right)\sum_{p\in P(E)}\log p+[E:\Q]\log n.$$
\end{lemma}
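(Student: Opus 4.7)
The plan is to peel off the two contributions on the right hand side separately. First I would apply the tower formula for discriminants, which gives $d_E = \No_{K/\Q}(\mathfrak{d}_{E/K}) \cdot d_K^n$ for the relative discriminant $\mathfrak{d}_{E/K}$; taking logarithms accounts for the summand $n \log d_K$ and reduces everything to establishing
\[ \log \No_{K/\Q}(\mathfrak{d}_{E/K}) \leq [E:\Q]\left(1-\frac{1}{n}\right)\sum_{p \in P(E)} \log p + [E:\Q] \log n. \]

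Next I would pass from the discriminant to the different via $\No_{K/\Q}(\mathfrak{d}_{E/K}) = \No_{E/\Q}(\mathfrak{D}_{E/K})$ and work prime by prime. For each prime $\mathfrak{P}$ of $E$ above a prime $\mathfrak{p}$ of $K$ with ramification index $e = e(\mathfrak{P}/\mathfrak{p})$, I would invoke the standard local bound
\[ v_\mathfrak{P}(\mathfrak{D}_{E/K}) \leq (e-1) + v_\mathfrak{P}(e), \]
which decomposes the different exponent into a tame part $e-1$ and a wild part bounded by $v_\mathfrak{P}(e)$.

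Then I would sum the two pieces separately. For the tame contribution: for each $p \in P(E)$ and each $\mathfrak{p} \mid p$ that ramifies in $E$, I would use $efg = n$ together with $\log \No_{E/\Q}(\mathfrak{P}) = f \cdot f_{K_\mathfrak{p}/\Q_p} \log p$, so that summing over $\mathfrak{P} \mid \mathfrak{p}$ yields $n(1-1/e) f_{K_\mathfrak{p}/\Q_p} \log p$; bounding $1 - 1/e \leq 1 - 1/n$, summing over $\mathfrak{p} \mid p$ via $\sum_{\mathfrak{p} \mid p} e_{K_\mathfrak{p}/\Q_p} f_{K_\mathfrak{p}/\Q_p} = [K:\Q]$, and finally summing over $p \in P(E)$ produces the first term. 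For the wild contribution I would expand $v_\mathfrak{P}(e) = v_p(e) \cdot e \cdot e_{K_\mathfrak{p}/\Q_p}$ and bound $v_p(e) \leq v_p(n)$; the identical bookkeeping gives $v_p(n) [E:\Q] \log p$ per rational prime $p$, and the identity $\sum_p v_p(n) \log p = \log n$ arising from the prime factorization of $n$ converts this into the second term $[E:\Q] \log n$.

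Conceptually the argument is routine once the local estimate on $v_\mathfrak{P}(\mathfrak{D}_{E/K})$ is in hand, so I do not anticipate a serious obstacle. The main care required lies in the bookkeeping: keeping the relative ramification data $(e, f, g)$ in $E/K$ separate from the absolute data in the tower $\Q \subset K \subset E$, and making sure the worst-case replacement $1 - 1/e \leq 1 - 1/n$ is applied only to the relative ramification index and not inadvertently to a quantity over $\Q$.
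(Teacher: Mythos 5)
Your argument is correct and is essentially the standard proof of this bound: the paper gives no proof of its own but cites Serre, whose argument is exactly the combination you describe of the tower formula $d_E=\No_{K/\Q}(\mathfrak d_{E/K})\,d_K^{\,n}$ with the local different estimate $v_{\mathfrak P}(\mathfrak D_{E/K})\le e-1+v_{\mathfrak P}(e)$ and the $efg$-bookkeeping over $\Q\subset K\subset E$. The only step worth spelling out is that the tame sum uses $\sum_{\mathfrak p\mid p} f_{K_{\mathfrak p}/\Q_p}\le\sum_{\mathfrak p\mid p} e_{K_{\mathfrak p}/\Q_p}f_{K_{\mathfrak p}/\Q_p}=[K:\Q]$, which is the (correct) inequality your appeal to $\sum ef=[K:\Q]$ implicitly relies on.
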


Consider the following setup: Let $K$ be a fixed number field and $(L_q/K)_q$ a family of finite Galois extensions of $K$ where $q$ runs through the set of rational primes. For each $q$ let $C_q\subset\Gal(L_q/K)$ be closed under conjugation. We want to count those primes $\fp$ of $K$ satisfying $(\fp,L_q/K)\cap C_q\ne\emptyset$ for all $q$. We will also want to exclude a small set of primes for which certain characterizations fail, for example in the case of abelian varieties we want to exclude the primes of bad reduction. Thus, let $S$ be a finite set of primes of $K$.\footnote{Instead of taking $S$ to be finite one can also only require $S$ to have density $0$ but this will make some of the following estimates more complicated and one needs to make further assumptions, so we will only deal with the finite case which will suffice for our applications.} For the rest of the paper $k$ denotes a squarefree integer $\geq 1$. As an index to sums $k$ ranges over squarefree integers $\geq 1$ in increasing order. We introduce some further notation:
\begin{align}
\PP_K&:=\{\fp \,|\, \fp \text{ finite prime of } K \},\nonumber\\
\PP_{K,S} &:= \PP_K \backslash S,\nonumber\\
\PP_q&:=\{\fp\in \PP_{K,S} \mid(\fp,L_q/K)\cap C_q\ne\emptyset \},\nonumber\\
\PP &:=\cap_{q} \PP_q = \{\fp\in\PP_{K,S} \mid\text{$(\fp,L_q/K)\cap C_q\ne\emptyset$ for all $q$}\},\nonumber\\
L_k&:= \text{compositum of the $L_q$ with $q | k$},\nonumber\\
G_k&:=\Gal(L_k/K),\nonumber\\
n_k&:=[L_k:\Q],\label{families_extensions_notation}\\
\NN_k&:=\{\fp\in \PP_{K,S} \mid\text{$(\fp,L_q/K)\cap C_q=\emptyset$ for all $q\mid k$}\},\nonumber\\
\Dk &:=\{\sigma\in G_k\mid \sigma\rvert_{L_q}\notin C_q\text{ for all }q\mid k\}, \nonumber\\
\delta_k&:=\frac{\#\Dk}{\#G_k},\nonumber\\
d_k &:= \text{absolute value of the discriminant of $L_k/\Q$},\nonumber\\
r_k &:= \text{product of the rational primes ramifying in $L_k$},\nonumber\\
\mathfrak f & :=\sum_{k}\mu(k)\delta_k.\nonumber
\end{align}

Our goal is to determine conditions such that the density of $\PP$
exists and is equal to $\mathfrak f$. The result is presented in
Theorem \ref{theorem:main_theorem_CM_families} below which we will
derive now.
We also assume right away that the series for $\mathfrak f$ converges
absolutely.

We want to estimate $\PP(x)$ using the inclusion-exclusion principle. To make sense of this we make the following assumption: There is a function $s:\Bbb R_{>0}\to\Bbb R_{>0}$ with the property that for every $x\in\Bbb R_{>0}$ we have: 
\begin{align}\label{sx_definition}
\NN_k(x)\ne\emptyset\Rightarrow k\leq s(x).
\end{align}
In other words, for primes $\fp\notin S$ with norm less than $x$ the condition $(\fp,L_q/K)\cap C_q\ne\emptyset$ is satisfied for some $q\mid k$ whenever $k>s(x)$. We can now write:
\begin{align}\label{PP_formula}
\PP(x) = \sum_{k\leq s(x)}\mu(k)\NN_{k}(x).
\end{align}
We know from the Chebotarev density theorem (Theorem \ref{theorem:effective_chebotarev}) that $\NN_k(x)\sim\delta_k\li(x)$. Thus, we would expect that
\begin{align}\label{PP_goal_prev}\PP(x)\sim \left(\sum_{k\leq s(x)}\mu(k)\delta_k\right)\li(x).
\end{align}
This would imply that
\begin{align}\label{PP_goal}\PP(x)\sim \mathfrak f\li(x)
\end{align}
so that $\PP$ should have density $\mathfrak f$. There is, however, a
problem: In each approximation $\NN_k(x)\approx \delta_k\li(x)$ we get
an error term $e_k(x)$ such that $\NN_k(x) = \delta_k\li(x) +
e_k(x)$. This error term is small enough so that we have
$\NN_k(x)/(\delta_k\li(x)) \to 1$ as $x\to\infty$ for each
$k$ individually. But when summing over all $k$ the sum of these error
terms might become larger than the main term $\mathfrak f\li(x)$ so
that we cannot conclude \eqref{PP_goal_prev}. We now inspect this
error term. Assuming GRH, we know by Theorem
\ref{theorem:effective_chebotarev} that $$e_k(x)=O(\#S)+O(\log
r_k)+O\left(\delta_k\sqrt{x}(\log d_k+n_k\log x)\right).$$ The summand
$O(\#S)$ comes from the set of excluded primes $S$ and the second
summand $O(\log r_k)$ is due to the fact that in Theorem
\ref{theorem:effective_chebotarev} we only count the unramified
primes. Since the number of such primes equals the number of prime
divisors of $r_k$ and $[K:\Q] = O(1)$, it is $\ll \log r_k$.  Hence:
\begin{align*}
\PP(x)&=\sum_{k\leq s(x)}\mu(k)\NN_k(x)
=\Biggl(\underbrace{\sum_{k\leq s(x)}\mu(k)\delta_k}_{=:\mathfrak f_{s(x)}}\Biggr)\li(x)+\sum_{k\leq s(x)}e_k(x)\\
&=\mathfrak f_{s(x)}\li(x)+O(s(x)\#S)+O\Biggl(\sum_{k\leq s(x)}\log r_k\Biggr)+O\Biggl(\sum_{k\leq s(x)}\delta_k\sqrt{x}(\log d_{k}+n_k\log x)\Biggr)\nonumber
\end{align*}
If $\delta_k\ne0$ for all $k$ then $\delta_k n_k \geq 1$,
and the last sum is $\gg \sum_{k\leq s(x)} \sqrt{x}\log x= s(x)\sqrt{x}\log x$. In our applications we can choose $s(x)$ only so that $s(x)\gg \sqrt{x}$. Hence, the this sum is $\gg x\log x$ which is too large. Thus, the bound for the error term coming from the effective Chebotarev theorem is too weak in order to conclude $\PP(x)=\mathfrak f\li(x)+o(\li(x))$. An example by Weinberger (see \cite{WeinbergerCounterexample}) where \eqref{PP_goal} does not hold shows that the sum of the error terms can indeed become too large, whence we need to make further assumptions.

In the treatment of \eqref{PP_formula} we will follow and generalize the method in \cite{CojocaruMurtyElliptic}. The idea will be to require $\NN_k(x)$ to fall off quickly as $k\to\infty$. This will allow us to split up formula \eqref{PP_formula} so that we only need to apply the ``bad'' error term from Chebotarev's theorem for small values of $k$. For this we let $y=y(x)$ to a function between $1$ and $s$ satisfying $y(x)\to\infty$ as $x\to\infty$. We will choose $y$ concretely later in order to obtain the best bounds. We now write:
\begin{align*}
\PP(x)=\underbrace{\sum_{k\leq y(x)}\mu(k)\NN_k(x)}_{\Sigma_{\text{main}}}+\underbrace{\sum_{y(x)<k\leq s(x)}\mu(k)\NN_k(x)}_{\Sigma_{\text{error}}}
\end{align*}
In the same way as before we obtain $$\Sigma_{\text{main}}=\mathfrak f_{y(x)}\li(x)+O(y \#S)+O\Biggl(\sum_{k\leq y(x)}\log r_k\Biggr)+O\Biggl(\underbrace{\sum_{k\leq y(x)}\delta_k\sqrt{x}(\log d_{k}+n_k\log x)}_{=:\tilde{e}(x)}\Biggr)$$ where $\frak f_{y(x)}=\sum_{k\leq y(x)}\mu(k)\delta_k$.
Using Lemma \ref{lemma:discriminant_bound} we get: $$\log d_{k}\leq n_k\left(\log d_K+\log r_k+\log n_k\right)=n_k\log (d_Kr_kn_k).$$
Therefore, observing $n_k \ll \#G_k$, 
\begin{align*}
\tilde{e}(x)&\ll\sum_{k\leq y(x)}\delta_k\#G_k\sqrt{x}(\log (d_Kr_kn_k)+\log x)
\end{align*}
In order to bound this we make the assumption that $r_k$ and $n_k$ grow at most polynomially in $k$, so that $\log (d_Kr_kn_k)\ll \log k$. As $\delta_k\#G_k=\#\Dk\leq \#G_k\leq n_k$ we have $\delta_k\#G_k\ll k^{\gamma}$ for some $\gamma\geq0$. Hence
\begin{align*}
\tilde{e}(x)&\ll\sqrt{x}\Biggl(\sum_{k\leq y(x)}k^\gamma\log k+\log x\sum_{k\leq y(x)}k^\gamma\Biggr)
\ll\sqrt{x}\left(y(x)^{1+\gamma}\log y(x)+y(x)^{1+\gamma}\log x\right).
\end{align*}
It turns out that the value for $y(x)$ we will choose later satisfies $y(x)\leq x$, so we get $$\tilde{e}(x)\ll\sqrt{x}(\log x)y(x)^{1+\gamma}.$$  As both $y(x)\#S$ and $\sum_{k\leq y(x)}\log r_k\ll\sum_{k\leq y(x)}\log k\ll y(x)\log y(x)$ are smaller than this we obtain: 
\begin{align}\label{main_term_estimate}
\Sigma_{\text{main}}=\mathfrak f_y\li(x)+O(\sqrt{x}(\log x)y(x)^{1+\gamma})
\end{align}
We want the error term $| \PP(x) - \mathfrak f_{y(x)}\li(x)|$ for $\PP(x)$ to be $o(\li(x))$, so we have to choose $y(x)$ small enough in~\eqref{main_term_estimate}. But if we take $y(x)$ too small, the sum $\Sigma_{\text{error}}$ will be too big. We make the ansatz $|\Sigma_{\text{error}}|\leq\sum_{y(x)<k\leq s(x)}\NN_k(x)\ll \frac{x^\alpha}{y(x)^\beta}$ for some $\alpha,\beta\geq0$.
Putting this and \eqref{main_term_estimate} together we obtain:
\begin{align*}
\PP(x)=\mathfrak f_y\li(x)+O(\sqrt{x}(\log x)y(x)^{1+\gamma})+O\left(\frac{x^\alpha}{y(x)^{\beta}}\right).
\end{align*}
Since we want both big-$O$ terms to be as small as possible, we pick $y$ as in \cite[p.\ 614]{CojocaruMurtyElliptic} so that we have
$\sqrt{x}(\log x)y(x)^{1+\gamma} =\frac{x^\alpha}{y(x)^{\beta}}$, i.e.\ $$y(x):=\biggl(\frac{x^{\alpha-\frac{1}{2}}}{\log x}\biggr)^{(\beta+\gamma+1)^{-1}}.$$
This yields the error term
$$\sqrt{x}(\log x)y(x)^{1+\gamma}=x^{\frac{1}{2}+(\alpha-\frac{1}{2})\frac{1+\gamma}{\beta+\gamma+1}}(\log x)^{1-\frac{1+\gamma}{\beta+\gamma+1}}=x^{\alpha-\beta\frac{\alpha-\frac{1}{2}}{\beta+\gamma+1}}(\log x)^{\frac{\beta}{\beta+\gamma+1}}=\frac{x^\alpha}{y(x)^{\beta}}.$$
Thus we want $\frac{1}{2}+(\alpha-\frac{1}{2})\frac{1+\gamma}{\beta+\gamma+1}<1$, as in this case the error term is $o(x/\log(x))=o(\li(x))$. We also require $\alpha$ to be $>\frac{1}{2}$ because then $y(x)\to\infty$ as $x\to\infty$ so that $\mathfrak f_{y(x)}\to\mathfrak f$ and hence $$\mathfrak f_y\li(x)=\mathfrak f\li(x)+(\mathfrak f_{y(x)}-\mathfrak f)\li(x)=\mathfrak f\li(x)+o(\li(x)).$$
We let $h(y(x))=\mathfrak f_{y(x)}-\mathfrak f=\sum_{k>y(x)}\mu(k)\delta_k$ and obtain the additional error term $O(h(y(x))\li(x))$. We summarize the result in the following theorem:

\begin{theorem}\label{theorem:main_theorem_CM_families}
  Let $K$ be a number field, for each rational prime $q$ let $L_q$ be a finite Galois extension of $K$ and $C_q\subset\Gal(L_q/K)$ closed under conjugation. Let $S$ be a finite set of primes of $K$. Define all the quantities in \eqref{families_extensions_notation}. Assume that
  \begin{enumerate}
\item \label{thmmain::cond_conv} $\sum_{k} \mu(k) \delta_k$ converges absolutely,
\item \label{thmmain::cond_poly} $r_k,n_k$ grow at most polynomially in $k$,
\item \label{thmmain::cond_sx} there is a function $s:\R_{>0}\to\R_{>0}$ with the property that $$\NN_k(x)\ne\emptyset\implies k\leq s(x)$$ for all squarefree $k$ and $x\in\R_{>0}$,
\item \label{thmmain::cond_abg} there are $\alpha,\beta,\gamma$ such that $\alpha>\frac{1}{2},\beta,\gamma\geq0$, $\frac{1}{2}+(\alpha-\frac{1}{2})\frac{1+\gamma}{\beta+\gamma+1}<1$ and $$\sum_{y(x)<k\leq s(x)}\NN_k(x)\ll \frac{x^\alpha}{y(x)^\beta},\quad c_k\ll k^\gamma,$$
 where $y(x)=\bigl(\frac{x^{\alpha-\frac{1}{2}}}{\log x}\bigr)^{(\beta+\gamma+1)^{-1}}$.
\item GRH holds for all the fields $L_k$.
\end{enumerate}
Then the set $$\PP=\{\fp\in\PP_{K,S} \mid\text{$(\fp,L_q/K)\cap C_q\ne\emptyset$ for all primes $q$}\}$$ has density $\mathfrak f=\sum_{k}\mu(k)\delta_k$. More precisely 
\begin{align*}
\PP(x)=\mathfrak f\li(x)+O\left(h(y(x))\li(x)\right)+O\left(x^{\frac{1}{2}+(\alpha-\frac{1}{2})\frac{1+\gamma}{\beta+\gamma+1}}(\log x)^{1-\frac{1+\gamma}{\beta+\gamma+1}}\right)
\end{align*}
with $h(x)=\sum_{k>x} \mu(k) \delta_k$.
\end{theorem}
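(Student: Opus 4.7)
The proof is essentially a formalization of the heuristic computation laid out before the statement. The plan is to prove it in three steps: decompose $\PP(x)$ via inclusion-exclusion, estimate the two resulting pieces separately, and then optimize the splitting parameter $y(x)$.

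First I would justify the identity $\PP(x) = \sum_{k \leq s(x)} \mu(k) \NN_k(x)$. A prime $\fp \in \PP_{K,S}$ with $\No\fp \leq x$ lies in $\PP$ iff for every rational prime $q$ one has $(\fp, L_q/K) \cap C_q \neq \emptyset$. By assumption \eqref{thmmain::cond_sx}, only squarefree $k \leq s(x)$ can contribute a nonempty $\NN_k(x)$, so standard Möbius inversion over the set of $q$ for which $\fp$ fails the condition yields the formula. Then I split at the chosen $y(x)$ into $\Sigma_{\text{main}}$ and $\Sigma_{\text{error}}$.

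For $\Sigma_{\text{main}}$ I would apply Theorem \ref{theorem:effective_chebotarev} to each $L_k/K$, using Lemma \ref{lemma:discriminant_bound} to bound $\log d_k \ll n_k \log(d_K r_k n_k)$. Condition \eqref{thmmain::cond_poly} makes this $\ll n_k \log k$, and condition \eqref{thmmain::cond_abg} gives $\delta_k \# G_k \leq \#D_k \ll k^\gamma$ (writing $c_k := \# D_k$). Summing over $k \leq y(x)$ yields the clean bound $\tilde{e}(x) \ll \sqrt{x}(\log x) y(x)^{1+\gamma}$, subsuming the $O(y(x) \#S)$ and $O(\sum_{k \leq y(x)} \log r_k)$ terms. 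This gives
\[
\Sigma_{\text{main}} = \mathfrak{f}_{y(x)} \li(x) + O\bigl(\sqrt{x}(\log x) y(x)^{1+\gamma}\bigr).
\]
For $\Sigma_{\text{error}}$, I would use the trivial bound $|\Sigma_{\text{error}}| \leq \sum_{y(x) < k \leq s(x)} \NN_k(x) \ll x^\alpha / y(x)^\beta$ directly from \eqref{thmmain::cond_abg}.

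To convert $\mathfrak{f}_{y(x)} \li(x)$ into $\mathfrak{f} \li(x)$ I invoke the absolute convergence from \eqref{thmmain::cond_conv}: by definition $\mathfrak{f}_{y(x)} - \mathfrak{f} = -h(y(x))$, so $\mathfrak{f}_{y(x)} \li(x) = \mathfrak{f} \li(x) + O(h(y(x)) \li(x))$. Now I plug in the choice $y(x) = (x^{\alpha - 1/2}/\log x)^{1/(\beta+\gamma+1)}$, which is the unique value that equates the two error terms above; a direct calculation (already carried out in the exposition) gives the combined bound $x^{1/2 + (\alpha - 1/2)(1+\gamma)/(\beta+\gamma+1)} (\log x)^{1 - (1+\gamma)/(\beta+\gamma+1)}$. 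The inequalities $\alpha > 1/2$ and $\frac{1}{2} + (\alpha - \frac{1}{2})\frac{1+\gamma}{\beta+\gamma+1} < 1$ in \eqref{thmmain::cond_abg} ensure that both this error term and $h(y(x)) \li(x)$ are $o(\li(x))$, yielding the asymptotic density $\mathfrak{f}$ and the precise error term claimed.

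The main technical obstacle is not any single estimate but rather the careful bookkeeping of the various error contributions when summing Chebotarev's theorem across many extensions $L_k$ of potentially unbounded degree. In particular, the verification that Lemma \ref{lemma:discriminant_bound} combined with the polynomial growth hypothesis \eqref{thmmain::cond_poly} really does absorb the troublesome $\log d_k$ term into the manageable $k^\gamma \log k$ factor, uniformly in $k \leq y(x)$, is the step where one must be most careful; everything else reduces to the algebra already sketched in the text preceding the theorem.
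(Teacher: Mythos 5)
Your proposal is correct and follows essentially the same route as the paper, which proves the theorem exactly by the inclusion--exclusion decomposition, the split at $y(x)$, the Chebotarev-plus-discriminant-bound estimate for $\Sigma_{\text{main}}$, the hypothesized tail bound for $\Sigma_{\text{error}}$, and the balancing choice of $y(x)$. Your reading of $c_k$ as $\#\Dk=\delta_k\#G_k$ and the absorption of the $O(y(x)\#S)$ and $O(\sum_{k\le y(x)}\log r_k)$ terms also match the paper's argument.
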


The bounds that we obtain for $\sum_{y(x)<k\leq s(x)}\NN_k(x)$ in our applications are not of the above form $x^\alpha/y(x)^\beta$ but rather sums of terms of this form. It turns out that one of these terms will dominate while the others may be neglected. The method of proving the theorem can be easily adapted to show the following corollary.

\begin{corollary} \label{corollary:main_theorem_CM_families_cor}
Let the assumptions be as in the theorem where the fourth point is replaced by:
\begin{enumerate}
\item[\mylabel{thmmain::cond_abgprime}{4'}.] There are $\alpha,\beta,\alpha_1,\beta_1,\dots,\alpha_m,\beta_m,\gamma\geq0$ such that $\alpha>\frac{1}{2}$, $\frac{1}{2}+(\alpha-\frac{1}{2})\frac{1+\gamma}{\beta+\gamma+1}<1$, $\alpha_i-\beta_i\frac{\alpha-\frac{1}{2}}{\gamma+\beta+1}<1$ for all $i=1,\dots,m$ and $$\sum_{y(x)<k\leq s(x)}\NN_k(x)\ll \frac{x^\alpha}{y(x)^\beta}+\sum_{i=1}^m\frac{x^{\alpha_i}}{y(x)^{\beta_i}},\quad c_k\ll k^\gamma,$$
where $y(x)=\bigl(\frac{x^{\alpha-\frac{1}{2}}}{\log x}\bigr)^{(\beta+\gamma+1)^{-1}}$.
\end{enumerate}
Then $\PP$ has density $\mathfrak f$. If $\alpha_i-\beta_i\frac{\alpha-\frac{1}{2}}{\gamma+\beta+1}\leq\frac{1}{2}+(\alpha-\frac{1}{2})\frac{1+\gamma}{\beta+\gamma+1}$, we get the same bound on the error term as in the theorem.
\end{corollary}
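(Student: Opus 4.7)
The plan is to rerun the proof of Theorem \ref{theorem:main_theorem_CM_families} verbatim up to the point where the error sum is estimated, and then handle the extra terms $x^{\alpha_i}/y(x)^{\beta_i}$ one at a time. Concretely I would split
$$\PP(x) = \Sigma_{\text{main}} + \Sigma_{\text{error}}$$
at the threshold $y(x) = \bigl(x^{\alpha-1/2}/\log x\bigr)^{1/(\beta+\gamma+1)}$. The estimate \eqref{main_term_estimate}, $\Sigma_{\text{main}} = \mathfrak f_{y}\li(x) + O(\sqrt{x}(\log x) y(x)^{1+\gamma})$, is unchanged: its derivation only uses the polynomial growth of $r_k,n_k$, the bound $c_k \ll k^\gamma$, the setup for $S$ and $\NN_k$, and GRH, all of which remain in force. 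Since $\alpha > 1/2$ still forces $y(x) \to \infty$, absolute convergence of $\mathfrak f$ still gives $\mathfrak f_{y}\li(x) = \mathfrak f\li(x) + O(h(y(x))\li(x))$.

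The new ingredient is the estimation of $\Sigma_{\text{error}}$ via \ref{thmmain::cond_abgprime}. Plugging $y(x)$ into the leading term $x^\alpha/y(x)^\beta$ reproduces, by the balancing identity defining $y(x)$,
$$\frac{x^\alpha}{y(x)^\beta} = \sqrt{x}(\log x) y(x)^{1+\gamma} = x^{\,\alpha - \beta\frac{\alpha - 1/2}{\beta+\gamma+1}}(\log x)^{\beta/(\beta+\gamma+1)},$$
which is already absorbed into the main error from $\Sigma_{\text{main}}$. For each remaining index $i$, the same substitution yields
$$\frac{x^{\alpha_i}}{y(x)^{\beta_i}} = x^{\,\alpha_i - \beta_i\frac{\alpha - 1/2}{\beta+\gamma+1}}(\log x)^{\beta_i/(\beta+\gamma+1)},$$
and the hypothesis $\alpha_i - \beta_i\frac{\alpha-1/2}{\beta+\gamma+1} < 1$ guarantees that each such term is $o(x/\log x) = o(\li(x))$, since a fixed power of $\log x$ cannot defeat a strictly sub-linear power of $x$. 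Summing over the finitely many indices $i$ preserves this $o(\li(x))$ bound, so combining everything yields $\PP(x) = \mathfrak f\li(x) + o(\li(x))$ and hence the density $\mathfrak f$.

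For the refined error bound under the additional hypothesis $\alpha_i - \beta_i\frac{\alpha-1/2}{\gamma+\beta+1} \leq \frac{1}{2} + (\alpha-\frac{1}{2})\frac{1+\gamma}{\beta+\gamma+1}$, I would observe that the right-hand side equals $\alpha - \beta\frac{\alpha-1/2}{\beta+\gamma+1}$, i.e.\ the $x$-exponent of the primary error computed above. Hence each secondary term has $x$-exponent bounded by that of the primary term and, up to the polylog factors, is absorbed into it, producing the same error bound as in Theorem \ref{theorem:main_theorem_CM_families}. The only point requiring care throughout is bookkeeping on the powers of $\log x$ when comparing the secondary against the primary term; once the identity $\sqrt{x}(\log x)y(x)^{1+\gamma} = x^\alpha/y(x)^\beta$ is spelled out, everything is routine.
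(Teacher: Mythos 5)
Your proposal is correct and is essentially the paper's own proof, just written out in more detail: the main-term analysis is unchanged, and substituting $y(x)$ into each extra term gives the additional error $\sum_{i}O\bigl(x^{\alpha_i-\beta_i\frac{\alpha-1/2}{\beta+\gamma+1}}(\log x)^{\beta_i/(\beta+\gamma+1)}\bigr)$, which is $o(\li(x))$ by the hypothesis $\alpha_i-\beta_i\frac{\alpha-1/2}{\beta+\gamma+1}<1$ and is absorbed into the theorem's error term under the additional inequality. Your logarithm exponent $\beta_i/(\beta+\gamma+1)$ is in fact the correct one (the paper writes $\beta/(\beta+\gamma+1)$), and like the paper you gloss over the borderline case where the $x$-exponents are equal but $\beta_i>\beta$ — harmless in the paper's applications, where the inequality is strict.
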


\begin{proof}
The other terms in the bound for $\Sigma_{\text{error}}$ yield the additional error $$\sum_{i=1}^mO\Bigl(x^{\alpha_i-\beta_i\frac{\alpha-\frac{1}{2}}{\beta+\gamma+1}}(\log x)^{\frac{\beta}{\beta+\gamma+1}}\Bigr)$$ which is $o(\li(x))$ under the above assumption. If we have $$\alpha_i-\beta_i\frac{\alpha-\frac{1}{2}}{\gamma+\beta+1}\leq\frac{1}{2}+\left(\alpha-\frac{1}{2}\right)\frac{1+\gamma}{\beta+\gamma+1}$$ this error does not exceed the $O$-terms in the expression for $\PP(x)$ in the theorem.
\end{proof}


\subsection{Additional Frobenius Conditions}\label{section:add_frob_cond}

We continue to use the notation~\eqref{families_extensions_notation} and assume
that $\mathfrak f = \sum_k \mu(k) \delta_k$ is an absolutely convergent series.

Suppose that we are given an additional finite Galois extension $F/K$ and a subset $C_F\subset \Gal(F/K)$ closed under conjugation. We now want to count the primes in \begin{align*}
\PP_{C_F}=\{\fp \in \PP_{K,S} \mid\text{$(\fp,F/K)\cap C_F\ne\emptyset$, $(\fp,L_q/K)\cap C_q\ne\emptyset$ for all $q$}\},
\end{align*}
i.e.\ those $\fp$ that satisfy the additional Frobenius condition
$(\fp,F/K)\cap C_F\ne\emptyset$. 

At first sight it seems superfluous to consider an additional Frobenius condition separately: We could simply shift the indices of the family $(L_q/K)_q$ to include $F$ in it and then apply the results from the previous section. However, this situation is still worth studying as it turns out that we can add a single additional Frobenius condition $(\fp,F/K)\cap C_F\ne\emptyset$ \textit{unconditionally} with respect to GRH as long as we know that the conclusion of Theorem \ref{theorem:main_theorem_CM_families} holds. This is the content of Theorem~\ref{lemma:add_frobenius_condition}. It is based on and generalizes \cite[Lemma 3.2]{LenstraArtinConjecture} which only deals with the case~$C_q=\{\id_{L_q}\}$ for all $q$.

\begin{theorem}\label{lemma:add_frobenius_condition}
  Assume that $\PP$ has density $\mathfrak f = \sum_k \mu(k) \delta_k$.
 Then the set $\PP_{C_F}$ has
 density $$\mathfrak f_{C_F} := \sum_{k} \mu(k)\delta_{C_F,k} \;\;
 \text{ where } \;\;
 \delta_{C_F,k}:=\frac{\#\{\sigma\in\Gal(L_kF/K)\mid
   \sigma\rvert_F\in C_F, \sigma\rvert_{L_k}\in \Dk \}}{[L_kF:K]}$$
 and the series is absolutely convergent.
\end{theorem}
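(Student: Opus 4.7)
The plan is to generalise the truncation/sandwich argument of Lenstra's Lemma 3.2 to our setting, using the assumption that $\PP$ has density $\mathfrak f$ in place of GRH. For each integer $N \geq 1$ set $N^\# := \prod_{q \leq N} q$ and define the truncated sets
\[
\PP^{(N)} := \{\fp \in \PP_{K,S} \mid (\fp,L_q/K) \cap C_q \neq \emptyset \text{ for all primes } q \leq N\},
\]
\[
\PP_{C_F}^{(N)} := \{\fp \in \PP^{(N)} \mid (\fp,F/K) \cap C_F \neq \emptyset\}.
\]
Each of these is cut out by Frobenius conditions in the single finite Galois extensions $L_{N^\#}/K$ and $L_{N^\#}F/K$, so by unconditional Chebotarev (or by finite inclusion--exclusion in the spirit of~\eqref{PP_formula}) their natural densities exist and are equal to $\mathfrak f^{(N)} := \sum_{k\mid N^\#} \mu(k)\delta_k$ and $\mathfrak f_{C_F}^{(N)} := \sum_{k\mid N^\#} \mu(k)\delta_{C_F,k}$ respectively.

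The first preliminary step is absolute convergence of $\sum_k \mu(k)\delta_{C_F,k}$. Restricting $\sigma$ to $L_k$ gives a surjection of the $F$-conditioned set onto a subset of $D_k$ with fibres of size $[L_kF:L_k]$, hence
\[
\delta_{C_F,k} \leq \frac{\#D_k \cdot [L_kF:L_k]}{[L_kF:K]} = \frac{\#D_k}{\#G_k} = \delta_k,
\]
so absolute convergence of $\mathfrak f = \sum_k \mu(k)\delta_k$ forces absolute convergence of $\mathfrak f_{C_F}$, and in particular $\mathfrak f^{(N)} \to \mathfrak f$ and $\mathfrak f_{C_F}^{(N)} \to \mathfrak f_{C_F}$ as $N \to \infty$. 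The second, and central, observation is the sandwich
\[
\PP_{C_F} \subseteq \PP_{C_F}^{(N)}, \qquad \PP_{C_F}^{(N)} \setminus \PP_{C_F} \subseteq \PP^{(N)} \setminus \PP,
\]
the first inclusion being trivial, the second because a prime in $\PP_{C_F}^{(N)} \setminus \PP_{C_F}$ satisfies the Frobenius condition and the conditions at all $q \leq N$, but fails the condition at some $q > N$, so it lies in $\PP^{(N)} \setminus \PP$.

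From the upper inclusion we read off $\limsup_{x \to \infty} \PP_{C_F}(x)/\li(x) \leq \mathfrak f_{C_F}^{(N)}$, and from the lower inclusion, using $\PP_{C_F}(x) = \PP_{C_F}^{(N)}(x) - (\PP_{C_F}^{(N)} \setminus \PP_{C_F})(x) \geq \PP_{C_F}^{(N)}(x) - \PP^{(N)}(x) + \PP(x)$, we deduce
\[
\liminf_{x \to \infty} \PP_{C_F}(x)/\li(x) \geq \mathfrak f_{C_F}^{(N)} - \mathfrak f^{(N)} + \mathfrak f.
\]
Letting $N \to \infty$ both bounds converge to $\mathfrak f_{C_F}$, which gives the existence of the density and the claimed value. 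The one subtlety I anticipate, and the main obstacle to streamlining, is that we do not know a priori that $\PP_{C_F}$ possesses a natural density, so the whole argument has to be phrased in terms of upper and lower densities and relies crucially on the hypothesised density of $\PP$ to convert the difference $\PP^{(N)} \setminus \PP$ into the vanishing quantity $\mathfrak f^{(N)} - \mathfrak f$.
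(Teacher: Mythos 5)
Your proof is correct, and the lower-bound half is genuinely different from the paper's. Both arguments share the same ingredients up to a point: the bound $\delta_{C_F,k}\leq\delta_k$ for absolute convergence (the paper asserts it without your explicit fibre count, which is a nice touch), the unconditional identity $d(\PP_{C_F,l})=\sum_{k\mid l}\mu(k)\delta_{C_F,k}$ for a finite level $l$ (the paper's Lemma~\ref{lemma::densitycomplementformula}, your ``Chebotarev plus finite inclusion--exclusion''), and the resulting upper bound $d_{\sup}(\PP_{C_F})\leq\mathfrak f_{C_F}$ from the trivial inclusion into the truncated set. Where you diverge is the lower bound: the paper applies the upper-bound lemma a second time to the complementary class $\overline{C_F}=\Gal(F/K)\setminus C_F$, notes that $\PP_{C_F}$ and $\PP_{\overline{C_F}}$ partition $\PP$ up to finitely many primes, and extracts $d_{\inf}(\PP_{C_F})\geq\mathfrak f-\mathfrak f_{\overline{C_F}}=\mathfrak f_{C_F}$ via $\liminf(a_n+b_n)\leq\liminf a_n+\limsup b_n$ and the identity $\delta_k=\delta_{C_F,k}+\delta_{\overline{C_F},k}$. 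You instead control the truncation error directly, bounding $\PP_{C_F}^{(N)}\setminus\PP_{C_F}$ by $\PP^{(N)}\setminus\PP$, whose density $\mathfrak f^{(N)}-\mathfrak f$ vanishes as $N\to\infty$ by the hypothesis on $d(\PP)$ and absolute convergence. The paper's complement trick is shorter and reuses one lemma twice, but it leans on the specific fact that $C_F$ and $\overline{C_F}$ partition $\Gal(F/K)$; your sandwich makes the mechanism more transparent --- the extra Frobenius condition is harmless precisely because the deficit between the level-$N$ truncation and the full intersection already has density zero in the limit --- and it never needs to introduce the complementary class. Both uses of the hypothesis $d(\PP)=\mathfrak f$ are essential and occur at the analogous step.
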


For the proof of this theorem we need the following two lemmata. Let $l \geq 1$ be squarefree and define \begin{align*}
  \PP_{C_F,l} & = \{\fp \in \PP_{K,S} \mid\text{$(\fp,F/K)\cap C_F\ne\emptyset$, $(\fp,L_q/K)\cap C_q\ne\emptyset$ for all $q|l$}\} \\
  C_{F, l} & = \{\sigma\in
\Gal(L_lF/K)\mid\sigma\rvert_F\in C_F\text{ and
}\sigma\rvert_{L_{q}}\in C_{q}\text{ for all $q|l$ } \}.\end{align*}

\begin{lemma} \label{lemma::densitycomplementformula} We have
  $$\sum_{k|l} \mu(k) \delta_{C_F, k} = \frac{1}{[L_lF : K]} \# C_{F,l} = d( \PP_{C_F,l} ). $$
\end{lemma}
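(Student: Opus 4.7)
The plan is to prove the two equalities separately: the first is a combinatorial Möbius-inversion identity carried out inside $\Gal(L_lF/K)$, and the second is a direct application of the Chebotarev density theorem.

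For the first equality, I would begin by rewriting $\delta_{C_F,k}$ using a common ambient group. Since $k\mid l$ implies $L_kF\subseteq L_lF$ and the restriction map $\Gal(L_lF/K)\twoheadrightarrow\Gal(L_kF/K)$ is surjective, we have $\delta_{C_F,k}=\#A_k/[L_lF:K]$, where
$$A_k=\{\sigma\in\Gal(L_lF/K)\mid \sigma\rvert_F\in C_F,\ \sigma\rvert_{L_q}\notin C_q\text{ for all }q\mid k\}.$$
Next I would exchange the order of summation in $\sum_{k\mid l}\mu(k)\#A_k$. For each $\sigma$ with $\sigma\rvert_F\in C_F$, the squarefree $k\mid l$ with $\sigma\in A_k$ are exactly the divisors of $\prod_{q\in B(\sigma)}q$, where $B(\sigma):=\{q\mid l:\sigma\rvert_{L_q}\notin C_q\}$. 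By the standard identity $\sum_{k\mid n}\mu(k)=[n=1]$, only the term corresponding to $B(\sigma)=\emptyset$ contributes a $1$, and summing over such $\sigma$ leaves precisely $\#C_{F,l}$.

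For the second equality, I would apply the Chebotarev density theorem to the Galois extension $L_lF/K$ with the subset $C_{F,l}$. The two preliminary checks are: first, $C_{F,l}$ is closed under conjugation in $\Gal(L_lF/K)$, which follows from the conjugation-closedness of $C_F$ and of each $C_q$ together with the fact that restriction to each of the Galois subfields $F$ and $L_q$ is a quotient map and hence compatible with conjugation; and second, for $\fp\in\PP_{K,S}$ unramified in $L_lF$, the combined conditions defining $\PP_{C_F,l}$ are equivalent to $(\fp,L_lF/K)\subseteq C_{F,l}$, which follows because restriction sends a Frobenius element to a Frobenius element, and conjugation-closedness of $C_F$ and of the $C_q$ upgrades every ``meets'' condition to a ``contained in'' condition on conjugacy classes. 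The finitely many primes in $S$ and the finitely many primes ramified in $L_lF$ contribute density zero.

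The main obstacle is purely bookkeeping, namely tracking how restriction interacts with conjugation and with the various conjugation-closed sets $C_F$, $C_q$, $C_{F,l}$. No analytic input beyond a single invocation of Chebotarev is required, and in particular this lemma itself does not depend on GRH.
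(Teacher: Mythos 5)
Your proposal is correct and follows essentially the same route as the paper: one application of the (non-effective) Chebotarev density theorem to $L_lF/K$ with the conjugation-closed set $C_{F,l}$, plus the Möbius/inclusion--exclusion identity obtained by pulling each $\delta_{C_F,k}$ back to the common group $\Gal(L_lF/K)$ via the index factor $[L_lF:L_kF]$. The only difference is cosmetic — the paper computes $\#C_{F,l}$ by inclusion--exclusion and pushes down, while you lift the $\delta_{C_F,k}$ up and swap the order of summation.
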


\begin{proof}
Note that for primes $\fp \in \PP_{K,S}$ that are unramified in $L_lF$ the condition $\fp\in \PP_{C_F, l}$ is equivalent to $(\fp,F/K)\subset C_F$ and $(\fp,L_{q}/K)\subset C_{q}$ for all $q|l$. This is again equivalent to $(\fp,L_lF/K)\subset C_{F,l}$. By the Chebotarev density theorem and the inclusion-exclusion principle
\begin{align*}
d( \PP_{C_F,l} )&=\frac{1}{[L_lF:K]}\# C_{F,l} \\ & = \frac{1}{[L_lF:K]}\sum_{k\mid l}\mu(k)\#\{\sigma\in\Gal(L_lF/K)\mid\sigma\rvert_F\in C_F\text{ and }\sigma\rvert_{L_q}\notin C_q\text{ for all $q\mid k$}\}\\
&=\sum_{k\mid l}\mu(k)\frac{[L_lF:L_kF]}{[L_lF:K]}\#\{\sigma\in\Gal(L_kF/K)\mid\sigma\rvert_F\in C_F\text{ and }\sigma\rvert_{L_q}\notin C_q\text{ for all $q\mid k$}\}\\
&=\sum_{k\mid l}\mu(k)\delta_{C_F,k}. \\[-3.5em] 
\end{align*}
\end{proof}

Let $d$, $d_{\sup}$ and $d_{\inf}$ denote the natural density, upper natural density and lower natural density  of a set of primes.

\begin{lemma}\label{lemma:add_frobenius_condition_helper}
It is always true that $d_{\sup}(\PP_{C_F})\leq \mathfrak f_{C_F}$.
\end{lemma}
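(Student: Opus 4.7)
The plan is to sandwich $\PP_{C_F}$ inside the truncated sets $\PP_{C_F,l}$ from Lemma \ref{lemma::densitycomplementformula} and then pass to a limit along a suitable sequence of $l$'s. For any squarefree integer $l\geq1$ the containment $\PP_{C_F}\subseteq \PP_{C_F,l}$ is immediate: requiring the Frobenius condition at every prime $q$ is strictly stronger than requiring it only for the (finitely many) primes $q\mid l$. Taking upper densities and using that $\PP_{C_F,l}$ has a genuine natural density by Lemma \ref{lemma::densitycomplementformula} yields
$$d_{\sup}(\PP_{C_F})\leq d(\PP_{C_F,l})=\sum_{k\mid l}\mu(k)\delta_{C_F,k}$$
for every such $l$.

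Before letting $l$ grow I would check that $\sum_k\mu(k)\delta_{C_F,k}$ converges absolutely, so the right-hand side above has a well-defined limit. This reduces to the elementary inequality $\delta_{C_F,k}\leq \delta_k$: among the $\#\Dk\cdot[L_kF:L_k]$ elements of $\Gal(L_kF/K)$ restricting to $\Dk$ on $L_k$, only a subset additionally satisfies $\sigma\rvert_F\in C_F$, and dividing by $[L_kF:K]$ gives the bound. The standing absolute convergence of $\sum_k\mu(k)\delta_k$ then transfers verbatim to $\sum_k\mu(k)\delta_{C_F,k}$.

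Finally, I would take $l=l_n:=\prod_{q\leq n}q$ to be the $n$-th primorial. The squarefree divisors of $l_n$ are precisely the squarefree integers whose prime factors are all at most $n$, and these exhaust all squarefree integers as $n\to\infty$. By the absolute convergence established in the previous step,
$$\sum_{k\mid l_n}\mu(k)\delta_{C_F,k}\;\longrightarrow\;\mathfrak f_{C_F}\qquad (n\to\infty),$$
and combining with the inequality from the first paragraph gives $d_{\sup}(\PP_{C_F})\leq \mathfrak f_{C_F}$.

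No step presents a serious obstacle here; the argument is a direct containment followed by a limit, with the only mild subtlety being to propagate absolute convergence from $\sum_k\mu(k)\delta_k$ to $\sum_k\mu(k)\delta_{C_F,k}$. The harder half, namely the matching lower bound $d_{\inf}(\PP_{C_F})\geq \mathfrak f_{C_F}$, is what will actually require the assumed density of $\PP$ (and is presumably handled in a subsequent lemma on the way to Theorem \ref{lemma:add_frobenius_condition}).
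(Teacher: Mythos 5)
Your proposal is correct and follows essentially the same route as the paper: the containment $\PP_{C_F}\subset\PP_{C_F,l}$, the identity from Lemma~\ref{lemma::densitycomplementformula}, and letting $l$ run through primorials. The only difference is that you spell out the absolute-convergence step ($\delta_{C_F,k}\leq\delta_k$) inside the lemma, whereas the paper records it in the proof of Theorem~\ref{lemma:add_frobenius_condition} and leaves it implicit here.
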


\begin{proof}
Let $l$ be the product of the first $n$ prime numbers. Lemma~\ref{lemma::densitycomplementformula}
gives $d( \PP_{C_F, l} ) = \sum_{k|l} \mu(k) \delta_{C_F,k}$. 
Note that $\PP_{C_F}\subset \PP_{C_F, l}$ so that $d_{\sup}(\PP_{C_F})\leq d(\PP_{C_F, l})$. Our claim follows once we let $n\to\infty$.
\end{proof}


\begin{proof}[Proof of Theorem~\ref{lemma:add_frobenius_condition}]
  By assumption $\PP$ has density $\mathfrak f=\sum_{k}
  \mu(k)\delta_k$ and this series is absolutely convergent.  Since $0
  \leq \delta_{C_F,k} \leq \delta_k$ we see that $\mathfrak f_{C_F} =
  \sum_{k} \mu(k)\delta_{C_F,k}$ is also absolutely
  convergent.

  It remains to prove that $d( \PP_{C_F} )$ exists and is equal to $\mathfrak f_{C_F}$.
  We apply Lemma
  \ref{lemma:add_frobenius_condition_helper} to the set
  $\overline{C_F}=\Gal(F/K)\setminus C_F$ to obtain
  \begin{align*}
d_{\sup}(\PP_{\overline{C_F}})\leq \mathfrak f_{\overline{C_F}}.
\end{align*}
Clearly, $\PP_{C_F}\cup \PP_{\overline{C_F}}$ differs from $\PP$ by at most a finite set of primes and similarly $\PP_{C_F}\cap \PP_{\overline{C_F}}$ consists of only finitely many primes. Thus, 
\begin{align*}
\mathfrak f=d(\PP)=d(\PP_{C_F}\cup \PP_{\overline{C_F}})&=d_{\inf}(\PP_{C_F}\cup \PP_{\overline{C_F}})\\
&\leq d_{\inf}(\PP_{C_F})+d_{\sup}(\PP_{\overline{C_F}})\\
&\leq d_{\inf}(\PP_{C_F})+\mathfrak f_{\overline{C_F}},
\end{align*}
where the first inequality uses the general fact that $\liminf (a_n +
b_n) \leq \liminf( a_n) + \limsup( b_n )$ for bounded sequences $(a_n)_n, (b_n)_n$
in $\mathbb R$.  It is easy to see that
$\delta_{k}=\delta_{C_F,k}+\delta_{\overline{C_F},k}$ so that
$\mathfrak f=\mathfrak f_{C_F} + \mathfrak f_{\overline{C_F}}$. The above inequalities and  Lemma
\ref{lemma:add_frobenius_condition_helper} yield $$\mathfrak f_{C_F} = \mathfrak f - \mathfrak f_{\overline{C_F}} \leq d_{\inf}(\PP_{C_F}) \leq d_{\sup}(\PP_{C_F}) \leq \mathfrak f_{C_F},$$ whence $d_{\inf}(\PP_{C_F})$ exists and is equal to $\mathfrak f_{C_F}$.
\end{proof}

We can also extend Theorem \ref{theorem:main_theorem_CM_families}
directly to include the additional Frobenius condition. Note that in
the following theorem $F$ only occurs in the
assumption of GRH for the fields $L_k F$ and not in any other
conditions that need verification in order to apply the theorem.

\begin{theorem}\label{theorem:main_theorem_CM_families_extra_condition}
Assume that GRH holds for the fields $L_kF$ and let otherwise the assumptions be as in Theorem \ref{theorem:main_theorem_CM_families} or in Corollary \ref{corollary:main_theorem_CM_families_cor}. Then $\PP_{C_F}$ has density $\frak f_{C_F}$ and the error term is the same as in the theorem.
\end{theorem}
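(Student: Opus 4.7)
The plan is to repeat the argument proving Theorem~\ref{theorem:main_theorem_CM_families} almost verbatim, but working throughout with the enlarged extensions $L_kF/K$ in place of $L_k/K$, and with the conjugation-closed subset
$$D_{C_F,k} := \{\sigma \in \Gal(L_kF/K) : \sigma|_F \in C_F,\ \sigma|_{L_k} \in \Dk\} \subseteq \Gal(L_kF/K),$$
which by construction satisfies $\#D_{C_F,k} = \delta_{C_F,k}\,[L_kF:K]$. Define
$$\NN_{C_F,k}(x) := \#\{\fp \in \PP_{K,S} : \No\fp \leq x,\ (\fp,F/K)\cap C_F \neq \emptyset,\ (\fp,L_q/K)\cap C_q = \emptyset\ \forall\, q\mid k\}.$$
Because assumption~\ref{thmmain::cond_sx} is unaffected by the extra $F$-condition, $\NN_{C_F,k}(x) = 0$ for $k > s(x)$, and the inclusion--exclusion identity $\PP_{C_F}(x) = \sum_{k\leq s(x)}\mu(k)\NN_{C_F,k}(x)$ holds. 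For any $\fp$ unramified in $L_kF$, being counted by $\NN_{C_F,k}$ is equivalent to $(\fp,L_kF/K)\subseteq D_{C_F,k}$, so effective Chebotarev (Theorem~\ref{theorem:effective_chebotarev}) applied to $L_kF/K$ under the new GRH hypothesis gives
$$\NN_{C_F,k}(x) = \delta_{C_F,k}\li(x) + O(\#S + \log r_{L_kF}) + O\!\left(\delta_{C_F,k}\sqrt{x}(\log d_{L_kF}+[L_kF:\Q]\log x)\right).$$

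Next I would check that the remaining hypotheses of Theorem~\ref{theorem:main_theorem_CM_families} (or Corollary~\ref{corollary:main_theorem_CM_families_cor}) transfer without altering the exponents $\alpha,\beta,\gamma$. Since $F/K$ is fixed, $[F:K]$, $d_F$ and $r_F$ are absolute constants, so $[L_kF:\Q] \leq [F:K]\cdot n_k$ and the product of rational primes ramifying in $L_kF$ is at most $r_k\,r_F$; both therefore remain polynomially bounded in $k$. Similarly $\#D_{C_F,k} \leq \#\Dk\cdot[L_kF:L_k] \leq [F:K]\cdot c_k \ll k^\gamma$, and absolute convergence of $\sum_k \mu(k)\delta_{C_F,k}$ follows from $0 \leq \delta_{C_F,k}\leq \delta_k$ together with assumption~\ref{thmmain::cond_conv}. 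Applying Lemma~\ref{lemma:discriminant_bound} to~$L_kF$ and rerunning the computation leading to~\eqref{main_term_estimate} then produces
$$\sum_{k\leq y(x)} \mu(k)\NN_{C_F,k}(x) = \mathfrak f_{C_F}\li(x) + O\!\left(h_{C_F}(y(x))\li(x)\right) + O\!\left(\sqrt{x}(\log x)y(x)^{1+\gamma}\right),$$
with $h_{C_F}(y) = \sum_{k>y}\mu(k)\delta_{C_F,k}\to 0$ as $y\to\infty$.

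For the tail $\Sigma_{\text{error}} = \sum_{y(x)<k\leq s(x)}\mu(k)\NN_{C_F,k}(x)$, the trivial containment $\NN_{C_F,k}(x) \leq \NN_k(x)$ allows me to invoke hypothesis~\ref{thmmain::cond_abg} (respectively~\ref{thmmain::cond_abgprime}) directly, yielding $|\Sigma_{\text{error}}| \ll x^\alpha/y(x)^\beta$ (plus the additional $x^{\alpha_i}/y(x)^{\beta_i}$ terms). With the same $y(x)$ the main-term and tail error balance exactly as before, producing the error bound asserted in Theorem~\ref{theorem:main_theorem_CM_families}.

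The only genuine obstacle is the bookkeeping in the second step: one must ensure that the ``$F$-bloat'' in the degree, discriminant and ramification data of $L_kF$ is absorbable into absolute constants and does not degrade the polynomial-growth exponents in assumption~\ref{thmmain::cond_poly} or the exponent $\gamma$. Since $F$ is fixed independently of~$k$, this is immediate, and the rest of the proof is a mechanical transcription of that of Theorem~\ref{theorem:main_theorem_CM_families}.
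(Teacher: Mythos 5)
Your proposal is correct and follows essentially the same route as the paper, which simply replaces $\NN_k$ by $\NN_{k,C_F}$ and reruns the proof of Theorem~\ref{theorem:main_theorem_CM_families} with the fields $L_kF$; you have merely made explicit the bookkeeping (degree, discriminant, ramification, and $\#D_{C_F,k}$ all change by factors depending only on the fixed field $F$, and $\NN_{k,C_F}(x)\leq\NN_k(x)$ handles the tail) that the paper leaves implicit. No gaps.
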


\begin{proof}
The same proof as the one of Theorem~\ref{theorem:main_theorem_CM_families} works: We replace $\PP$ by $\PP_{C_F}$ and $\NN_k$ by $$\NN_{k,C_F}=\{\fp\in\PP_{K,S}\mid (\fp,F/K)\cap C_F\ne\emptyset \text{ and }(\fp,L_q/K)\cap C_q=\emptyset \text{ for all }q\mid k\}.$$ We then write $$\PP_{C_F}(x)=\sum_{k\leq y(x)}\mu(k)\NN_{k,C_F}(x)+\sum_{y(x)<k\leq s(x)}\mu(k)\NN_{k,C_F}(x)$$ and proceed as before. 
\end{proof}

\subsection{Positivity of the Density}

We continue to use the notation~\eqref{families_extensions_notation} and assume
that $\mathfrak f = \sum_k \mu(k) \delta_k$ is an absolutely convergent series.

Even if we knew that $\PP$ has density $$\mathfrak f = \sum_{k} \mu(k)\delta_k,$$ it is not obvious at all whether or not it is positive.
In this section we want to deal with $\mathfrak f$ independently of Theorem~\ref{theorem:main_theorem_CM_families} and its assumptions. We will use only combinatorical arguments and derive some simple criteria for~$\mathfrak f > 0$.


\begin{proposition}\label{proposition:density_nonnegative}
We have $\mathfrak f \geq 0$, and if $C_q = \emptyset$ for some prime $q$ then $\mathfrak f=0$.
\end{proposition}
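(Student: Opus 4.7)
The plan is to derive both statements by specializing Lemma~\ref{lemma::densitycomplementformula} to the trivial Frobenius condition $F = K$, $C_F = \Gal(F/K) = \{\id_K\}$, and then passing to a limit. With this choice one has $L_kF = L_k$ and the condition $\sigma\rvert_F \in C_F$ is automatic, so
$$\delta_{C_F,k} = \frac{\#\{\sigma\in\Gal(L_k/K)\mid \sigma\in \Dk\}}{[L_k:K]} = \delta_k.$$
Lemma~\ref{lemma::densitycomplementformula} therefore gives, for every squarefree $l\geq 1$,
$$\sum_{k\mid l}\mu(k)\delta_k = \frac{\#C_{F,l}}{[L_l:K]},$$
where $C_{F,l} = \{\sigma\in\Gal(L_l/K)\mid \sigma\rvert_{L_q}\in C_q \text{ for all } q\mid l\}$. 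The right-hand side is manifestly non-negative.

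For the first statement, I would take $l = l_n := \prod_{q\leq n} q$, the product of the first $n$ rational primes. Each partial sum $\sum_{k\mid l_n}\mu(k)\delta_k$ is $\geq 0$ by the displayed identity. Since $\mathfrak f = \sum_k \mu(k)\delta_k$ converges absolutely by hypothesis, these partial sums converge to $\mathfrak f$ as $n \to \infty$, giving $\mathfrak f \geq 0$.

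For the second statement, suppose $C_{q_0} = \emptyset$ for some rational prime $q_0$. Once $n \geq q_0$, we have $q_0 \mid l_n$, and the constraint $\sigma\rvert_{L_{q_0}}\in C_{q_0} = \emptyset$ in the definition of $C_{F,l_n}$ is unsatisfiable, so $C_{F,l_n} = \emptyset$ and $\sum_{k\mid l_n}\mu(k)\delta_k = 0$. Letting $n\to\infty$ yields $\mathfrak f = 0$.

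There is no serious obstacle; the only mild point requiring care is confirming that Lemma~\ref{lemma::densitycomplementformula} indeed specializes to the identity above (which is purely combinatorial, since the inclusion-exclusion step does not rely on GRH), and invoking absolute convergence to justify passing from partial sums over divisors of $l_n$ to the full series.
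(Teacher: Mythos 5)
Your proof is correct and is essentially identical to the paper's: both specialize Lemma~\ref{lemma::densitycomplementformula} to $F=K$, $C_F=\{\id_K\}$ to see that the partial sums $\sum_{k\mid l_n}\mu(k)\delta_k = \#C_{F,l_n}/[L_{l_n}:K]$ are non-negative (and vanish once $q_0\mid l_n$ when $C_{q_0}=\emptyset$), then let $n\to\infty$ using absolute convergence.
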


\begin{proof}
  Let $l$ be the product of the first $n$ prime numbers. Then
  $\sum_{k|l}\mu(k)\delta_k \geq 0$ by
  Lemma~\ref{lemma::densitycomplementformula} applied with $F = K$ and
  $C_F = \{ \id_K \}$. If $C_q=\emptyset$ for some $q|l$ then $C_{F,l}
  = \emptyset$ and $\sum_{k| l}\mu(k)\delta_k=0$.
  Taking the limit $n \to \infty$ proves the assertion.
\end{proof}

Under additional assumptions we get the following converse to Proposition~\ref{proposition:density_nonnegative}.

\begin{proposition} \label{prop::densitynonneg}
Suppose that the family $(L_q/K)_q$ is independent in the sense that $L_k$
and $L_{k'}$ are linearly disjoint over $K$ whenever $k$ and $k'$ are
coprime. Then $$\mathfrak f=\prod_{q}(1-\delta_q)$$ In particular,
 if $C_q\ne\emptyset$ for all primes $q$ then $\mathfrak f>0$.
\end{proposition}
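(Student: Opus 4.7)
The plan is to reduce the computation to a standard Euler product identity, using linear disjointness as the key structural input.

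First I would exploit the independence assumption to establish multiplicativity of $\delta_k$. Linear disjointness over $K$ for pairwise coprime arguments implies that for any squarefree $k = q_1 \cdots q_m$, the restriction map
$$G_k \;\longrightarrow\; G_{q_1} \times \cdots \times G_{q_m}$$
is an isomorphism. Under this identification, the condition defining $D_k$ (namely $\sigma\vert_{L_q} \notin C_q$ for every $q \mid k$) decouples across the factors, so $D_k$ corresponds to $D_{q_1} \times \cdots \times D_{q_m}$. Counting gives $\delta_k = \prod_{q \mid k} \delta_q$, and since $D_q = G_q \setminus C_q$ we also have $1 - \delta_q = \#C_q/\#G_q$.

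Next I would match $\mathfrak{f}$ to the Euler product. Taking partial products over rational primes $q \leq N$ and expanding,
$$\prod_{q \leq N}(1 - \delta_q) \;=\; \sum_{k \mid \prod_{q \leq N} q} \mu(k) \prod_{q \mid k} \delta_q \;=\; \sum_{k \mid \prod_{q \leq N} q} \mu(k) \delta_k,$$
by the multiplicativity established above. The right-hand side is a partial sum of the series defining $\mathfrak{f}$, which converges absolutely by hypothesis, so letting $N \to \infty$ yields $\mathfrak{f} = \prod_q (1 - \delta_q)$.

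Finally, for positivity, assume $C_q \neq \emptyset$ for every $q$, so $1 - \delta_q > 0$ for every $q$. The absolute convergence of $\mathfrak{f}$, combined with the product formula $\sum_{k \text{ squarefree}} \delta_k = \prod_q (1 + \delta_q)$, forces $\sum_q \delta_q < \infty$. This is the standard criterion guaranteeing that the infinite product $\prod_q (1 - \delta_q)$ converges to a strictly positive value, completing the proof. The only mildly delicate point is the convergence bookkeeping in this last step: one must invoke absolute convergence to justify both the rearrangement leading to the Euler product and the non-vanishing of the resulting product, but no deeper obstacle is present.
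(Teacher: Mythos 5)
Your proposal is correct and follows essentially the same route as the paper: linear disjointness gives the isomorphism $G_k \cong \prod_{q\mid k} G_{q}$ under which $D_k$ decouples, hence $\delta_k$ is multiplicative, the Euler product follows from absolute convergence, and positivity comes from $\delta_q<1$ together with $\sum_q \delta_q<\infty$ (which, as you note, is immediate from the absolute convergence hypothesis). No gaps.
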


\begin{proof}
  Let $k,k'$ be coprime. By assumption the canonical map $G_{kk'}\to
  G_k\times G_{k'}$ is a bijection. It restricts to a bijection
  between $G_{kk'}\setminus C_{kk'}$ and $(G_{k}\setminus
  C_{k})\times(G_{k'}\setminus C_{k'})$, in other words the map
  $g:k\mapsto \mu(k)\delta_k=\mu(k)\frac{\#\Dk}{\#G_k}$ is
  multiplicative with $g(1) = 1$. Hence $$\mathfrak f=\sum_{k} g(k) =
  \prod_{q}(1+g(q))=\prod_q(1-\delta_q),$$ and the latter product
  converges to a non zero value if and only if $\delta_q < 1$ for all
  $q$ and $\sum_{q} \delta_q$ converges. Now $C_q \neq \emptyset$
  implies $\delta_q < 1$ and the convergence of the series $\sum_q \delta_q$ follows from
  the absolute convergence of $\sum_k \mu(k) \delta_k$.
\end{proof}

In the setting of abelian varieties considered in Section \ref{section:proofs} it will usually not be the case that the fields $L_k$ are completely independent. But in some cases, as for example~\cite[p.~621]{CojocaruMurtyElliptic},  this family of fields will contain other fields which form an independent family, and the problem can be reduced to these other fields. This can be done 
using the following lemma which is a generalization of \cite[Lemma 6.1]{CojocaruMurtyElliptic}.

\begin{lemma}\label{lemma:density_bound}
  Suppose we have another family $(L'_q/K)_{q\in U}$ of finite Galois extensions indexed by some subset $U$ of the set of rational primes and $C'_q\subset \Gal(L'_q/K)$ closed under conjugation. Define the fields $L'_k$ and the numbers $\delta'_k,\mathfrak f'$ as before for this family of fields for those squarefree $k$ whose prime factors are in $U$. Assume that
  \begin{enumerate}
  \item Every $L'_{q'}$ is contained in some $L_q$ and conversely every $L_q$ contains some $L'_{q'}$.
\item For each prime $q$ there are only finitely many $q'$ such that $L'_{q'}\subset L_q$.
\item If $L'_{q'}\subset L_q$, we have $\pi^{-1}(C'_{q'})\subset C_q$ where $\pi:\Gal(L_q/K)\to \Gal(L'_{q'}/K)$ is the restriction.
\item $\sum_k \mu(k) \delta'_k$ converges absolutely.
\end{enumerate}
Then $\mathfrak f\ge\mathfrak f'$.
\end{lemma}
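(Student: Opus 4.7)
The plan is to express the partial sums of $\mathfrak f$ and $\mathfrak f'$ as Galois-theoretic densities using Lemma~\ref{lemma::densitycomplementformula}, and then compare them by pulling back through the restriction map between compositum Galois groups. Specializing that lemma to $F=K$ and $C_F=\{\id_K\}$ identifies
\[
\sum_{k\mid l}\mu(k)\delta_k \;=\; \frac{\#B_l}{\#G_l}, \qquad B_l:=\{\sigma\in G_l \mid \sigma\rvert_{L_q}\in C_q \text{ for all } q\mid l\},
\]
and analogously yields $\sum_{k\mid l'}\mu(k)\delta'_k = \#B'_{l'}/\#G'_{l'}$ with $B'_{l'}$ defined the same way for the primed family. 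So it suffices to compare these finite ratios for suitably matched $l$ and $l'$.

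Given a finite set $Q$ of rational primes with product $l$, I would pair it with $Q' := \{q' \in U \mid L'_{q'} \subset L_q \text{ for some } q \in Q\}$, which is finite by assumption~2, and let $l'$ be its product. Since every $L'_{q'}$ with $q' \in Q'$ lies inside some $L_q \subset L_l$, the composite $L'_{l'}$ is contained in $L_l$, so restriction defines a surjection $\pi: G_l \twoheadrightarrow G'_{l'}$. The crucial step is the containment $\pi^{-1}(B'_{l'}) \subset B_l$: given $\sigma \in G_l$ with $\pi(\sigma) \in B'_{l'}$ and any $q \in Q$, assumption~1 supplies some $q' \in U$ with $L'_{q'} \subset L_q$, and by definition this $q'$ lies in $Q'$; then $\sigma\rvert_{L'_{q'}} \in C'_{q'}$, and assumption~3 forces $\sigma\rvert_{L_q} \in C_q$. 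Because every fiber of $\pi$ has size $[L_l : L'_{l'}]$, this containment rescales to the inequality
\[
\sum_{k\mid l'}\mu(k)\delta'_k \;=\; \frac{\#B'_{l'}}{\#G'_{l'}} \;\leq\; \frac{\#B_l}{\#G_l} \;=\; \sum_{k\mid l}\mu(k)\delta_k.
\]

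To finish, I would let $Q_n$ be the set of the first $n$ rational primes and $Q'_n$ its associated subset of $U$. Assumption~1 guarantees that every $q' \in U$ is eventually contained in $Q'_n$, so $\bigcup_n Q'_n = U$; by the absolute convergence of $\sum_k \mu(k)\delta_k$ (from the subsection preamble) and of $\sum_k \mu(k)\delta'_k$ (assumption~4), both partial sums converge, yielding $\mathfrak f' \leq \mathfrak f$ in the limit. The main obstacle is the pull-back inclusion $\pi^{-1}(B'_{l'}) \subset B_l$: it is the single place where all three compatibility hypotheses between the two families must be combined, and one must apply assumption~3 in the correct direction as the implication $\sigma\rvert_{L'_{q'}} \in C'_{q'} \Rightarrow \sigma\rvert_{L_q} \in C_q$.
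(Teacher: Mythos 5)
Your proposal is correct and follows essentially the same route as the paper: both reduce to the finite inequality $\sum_{k\mid l'}\mu(k)\delta'_k\leq\sum_{k\mid l}\mu(k)\delta_k$ via Lemma~\ref{lemma::densitycomplementformula} with $F=K$, $C_F=\{\id_K\}$, and both establish it by showing that the restriction map $\Gal(L_l/K)\to\Gal(L'_{l'}/K)$ pulls the primed ``good'' set back into the unprimed one, with fibers of size $[L_l:L'_{l'}]$. Your phrasing via $\pi^{-1}(B'_{l'})\subset B_l$ is just the preimage-counting formulation of the paper's extension-counting argument, and your limiting step matches the paper's use of assumption~4 and the absolute convergence assumed in the preamble.
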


\begin{proof}
 Let $T$ be a finite set of primes and $T'$ be the set
of primes $q'\in U$ such that $L'_{q'}\subset L_q$ for some $q\in
T$. By assumption~2 the set $T'$ is finite. Since every $L'_{q'}$ is
contained in some $L_q$ by assumption~1 the union of the sets $T'$ is
the set $U$ whenever the union of all $T$ is the set of all
primes. Let $l$ and $l'$ denote
the product of the primes in $T$ and in $T'$ respectively.
Thus, as in Proposition~\ref{proposition:density_nonnegative} and using
assumption~4, it suffices to prove
that \begin{align} \label{eq::density_bound0}
  \sum_{k|l}\mu(k)\delta_k\geq\sum_{k|l'}\mu(k)\delta'_k.
\end{align}
By Lemma~\ref{lemma::densitycomplementformula} applied with $F = K$ and
  $C_F = \{ \id_K \}$,  \eqref{eq::density_bound0} is
equivalent to
\begin{align}
\#\{\sigma\in \Gal (L_l/K)\mid & \; \sigma\rvert_{L_q}\in C_q\text{ for all } q|l\}\geq\nonumber\\ & [L_l:L'_{l'}] \, \#\{\sigma\in \Gal (L'_{l'}/K)\mid \sigma\rvert_{L'_{q'}}\in C'_{q'}\text{ for all } q'| l'\}\label{covering_ineq_to_prove} 
\end{align}
Let $\sigma\in\Gal (L'_{l'}/K)$ such that $\sigma\rvert_{L'_{q'}}\in C'_{q'}$ for all $q'\mid l'$ and let $\tau\in\Gal (L_l/K)$ be an extension of $\sigma$ to $L_l$. For every $q\in T$ there is $q'\in T'$ such that $L'_{q'}\subset L_q$ by assumption~1. Let $\pi:\Gal(L_q/K)\to \Gal(L'_{q'}/K)$ be the restriction. We have $\tau\rvert_{L_q}\in \pi^{-1}(C'_{q'})\subset C_q$ since $\tau\rvert_{L'_{q'}}=\sigma\rvert_{L'_{q'}}\in C'_{q'}$ and by assumption 3. Thus, $\tau$ is contained in the set on the left side of \eqref{covering_ineq_to_prove} and since every $\sigma$ admits exactly $[L_l:L'_{l'}]$ extensions to $L_l$, the inequality follows.
\end{proof}

The proof in \cite{CojocaruMurtyElliptic} is slightly different as they use Chebotarev's density theorem to prove~\eqref{eq::density_bound0} while we establish this inequality by purely combinatorical arguments.

\section{Proofs of the Main Results}\label{section:proofs}

Let $A$ be an abelian variety defined over a number field $K$ and $N$ its conductor. Let $\{a_1,\dots,a_g\}$ be a finite set of points in $A(K)$. As before for a rational prime $q$ prime let $L_q:=K(A[q], q^{-1}a)$. The following lemma gives a characterization of the primes we are interested in that will then allow us to use the results from Section \ref{section:general_ideas}.

\begin{lemma} \label{lemma::VirdolArtinAV2}
Let $\fp$ be a prime of $K$ such that $(\fp,N)=1$. Then for a rational prime $q\nmid \fp$ the group $\bar{A}(k(\fp))/\langle \bar{a}_1,\dots,\bar{a}_g\rangle$ contains a subgroup isomorphic to $(\Z/q\Z)^{2r}$ if and only if $\fp$ splits completely in $L_q$. Consequently, $a$ is primitive-cyclic modulo $\fp$ if and only if $\fp$ does not split in $L_q$ for all~$q\nmid\fp$.
\end{lemma}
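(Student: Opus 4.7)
My plan is to handle the two assertions of the lemma in sequence, abbreviating $B := \bar A(k(\fp))$, $H := \langle \bar a_1,\dots,\bar a_g\rangle$ and $G := B/H$, so that everything reduces to questions about the finite abelian group $G$.

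For the first assertion, I would note that for $q \nmid \fp$ the extension $L_q/K$ is unramified at $\fp$ (good reduction plus invertibility of $q$), so $\fp$ splits completely in $L_q = K(A[q], q^{-1}a)$ iff Frobenius at $\fp$ acts trivially on both generating sets. This translates into the conjunction of (a) $B[q] \cong (\Z/q\Z)^{2r}$, i.e.\ every $q$-torsion point becomes $k(\fp)$-rational, and (b) $H \subseteq qB$, i.e.\ each $\bar a_i$ is $q$-divisible in $B$. I would then show the equivalence $(a)\wedge(b) \Longleftrightarrow (\Z/q\Z)^{2r}\hookrightarrow G$ by a counting argument using $|B/qB| = |B[q]|$ and the fact that $\dim_{\F_q} G[q] = \dim_{\F_q} G/qG$ for finite abelian $G$. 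For the forward direction, $(b)$ gives a surjection $G \twoheadrightarrow B/qB \cong (\Z/q\Z)^{2r}$, so $\dim_{\F_q} G/qG \geq 2r$. For the converse, $|G/qG| = |B/(H+qB)| \geq q^{2r}$ together with $|B/qB| = |B[q]| \leq q^{2r}$ and $qB \subseteq H+qB$ forces both $H+qB = qB$ (yielding (b)) and $|B[q]| = q^{2r}$ (yielding (a)).

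For the second assertion, I would invoke the structure theorem: the minimal number of cyclic factors of $G$ equals $\max_q \dim_{\F_q} G[q]$, so $G$ has at least $2r$ cyclic components iff $(\Z/q\Z)^{2r} \hookrightarrow G$ for some prime $q$. The only subtlety is to verify that such a $q$ automatically satisfies $q \nmid \fp$ so that the first assertion applies. Here I would use that if $p$ is the residue characteristic of $\fp$, then good reduction forces $\dim_{\F_p} \bar A[p](\overline{k(\fp)}) \leq r < 2r$, ruling out $q = p$. Combining the two assertions yields the stated "Consequently."

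The main obstacle is the two-sided counting argument in the first assertion — getting the inequalities tight enough to extract both (a) and (b) simultaneously from the single hypothesis $(\Z/q\Z)^{2r} \hookrightarrow G$. Everything else is bookkeeping with the structure theorem and the good-reduction bound on the $p$-rank.
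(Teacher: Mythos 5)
Your argument is correct, but it is worth noting that the paper does not actually prove this lemma: it simply defers to \cite[Lemma 2.2]{VirdolArtinAV2}, adding only the remark that the independence of the $a_1,\dots,a_g$ assumed there is never used. You instead give a self-contained proof, and it holds up. The reduction of ``$\fp$ splits completely in $L_q$'' to the conjunction of (a) $B[q]\cong(\Z/q\Z)^{2r}$ and (b) $H\subseteq qB$ is the standard N\'eron--Ogg--Shafarevich/injectivity-of-reduction argument (consistent with the paper's later citation that $\fp$ can only ramify in $L_k$ if $\fp\mid kN$), and your two-sided counting argument is tight: from $(\Z/q\Z)^{2r}\hookrightarrow G$ you get $q^{2r}\leq|B/(H+qB)|\leq|B/qB|=|B[q]|\leq q^{2r}$, which forces both $H+qB=qB$ and $|B[q]|=q^{2r}$ simultaneously, exactly as you claim. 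For the ``consequently'' part, the identification of the minimal number of cyclic components with $\max_q\dim_{\F_q}G[q]$ and the exclusion of $q=p$ via the $p$-rank bound $\dim_{\F_p}\bar A[p](\overline{k(\fp)})\leq r<2r$ are both correct and necessary steps. A side benefit of your route is that it makes completely transparent the point the paper only asserts: no independence of the $a_i$ enters anywhere, since the counting argument never needs $H/(H\cap qB)$ to have any particular rank. If you wanted to polish the write-up, the only place deserving an extra line is the translation in (b): triviality of Frobenius requires \emph{all} $q$-division points of each $\bar a_i$ to lie in $B$, which is equivalent to ``some division point lies in $B$'' only in the presence of (a); but since you state the equivalence for the conjunction $(a)\wedge(b)$, this is a matter of exposition rather than a gap.
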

\begin{proof}
  We refer to \cite[Lemma 2.2]{VirdolArtinAV2}. There the $a_i$ are
  assumed to be independent, but this is not used in the proof. Hence
  we state the lemma in its more general form for our more general
  context.
\end{proof}


Note that the field $K$ contains only a finite number of cyclotomic fields. Let $l$ be the least common multiple of those integers $m$ for which $\zeta_m\in K$. In the rest of this section we shall only consider primes $\fp$ not dividing $lN$. Using the notation from Section \ref{section:general_ideas} we then have $S=\{\fp\in \PP_K \mid \fp\mid lN\}$, set $C_q=G_q\setminus\{\id_{L_q}\}$ and the quantities in
\eqref{families_extensions_notation}. Note that the two definitions of $L_k$ either as the composite field of the $L_q$ for $q\mid K$ or as $K(A[k],k^{-1}a)$ coincide.

Let $\fp$ a prime of $K$ not dividing $lN$ and $p$ the rational prime lying under $\fp$. Then $K\subsetneq K(\zeta_p)\subset L_p$ and hence $\fp$ is ramified and in particular does not split completely in $L_p$. Thus, up to the finite number of primes dividing $l$ the set of primes $\fp$ such that $a$ is primitive-cyclic mod $\fp$ coincides with $\PP$ according to Lemma~\ref{lemma::VirdolArtinAV2}. Having translated our problem into the language of the general framework in Section~\ref{section:general_ideas}, we immediately deduce Theorem \ref{theorem:av_unconditional_implication} from Theorem~\ref{lemma:add_frobenius_condition}.


We now want to verify the conditions, apart from GRH obviously, in Theorem \ref{theorem:main_theorem_CM_families} following~\cite{VirdolCyclicComp}. Let $\fp\in\NN_k(x)$, i.e.\ $\fp$ splits completely in $L_k$, $\No\fp\leq x$ and $\fp\nmid lN$. Then $\fp$ also splits completely in all $L_q$ for primes $q$ dividing $k$. Applying the lemma to these $q$ we obtain that $\bar{A}(k(\fp))/\langle \bar{a}_1,\dots,\bar{a}_g\rangle$ contains a subgroup isomorphic to $(\Z/q\Z)^{2r}$, in particular $q^{2r}\mid\#\overline{A}(k(\fp))$ and hence $k^{2r}\mid\#\overline{A}(k(\fp))$. If $\alpha_1,\dots,\alpha_{2r}$ denote the zeros of the characteristic polynomial of the Frobenius on $\overline A$, using $|\alpha_i| = (\No \fp)^{1/2}$ we get the following bound: $\#\overline{A}(k(\fp))=\prod_{i=1}^{2r}(1-\alpha_i)\leq \prod_{i=1}^{2r}2|\alpha_i|=2^{2r}\No\fp^r$. Hence $k\leq 2\No\fp^{1/2}\leq 2\sqrt{x}$. This shows that condition~\ref{thmmain::cond_sx} of Theorem~\ref{theorem:main_theorem_CM_families} is satisfied with $s(x)=2\sqrt{x}$. 

By \cite[Proposition C.1.5, Remark F.3.3]{HindrySilvermanDiophantineGeometry} and \cite[Theorem 1]{SerreTateGoodReduction} a prime $\fp$ of $K$ ramifying in $L_k$ has to divide $kN$. It follows that $r_k\mid kd_K N$.  For each $a_i$ the Galois group of $K(A[k],k^{-1}a_i)/K(A[k])$ embeds into $(\Z/k\Z)^{2r}$, see \cite[Lemma C.1.3]{HindrySilvermanDiophantineGeometry} and $\Gal(K(A[k])/K)\hookrightarrow\GL_{2r}(\Z/k\Z)$. From this we can bound $$n_k=[L_k:\Q] \leq [K:\Q]\prod_{i=1}^g \Bigl( [K(A[k],k^{-1}a_i):K(A[k])] \cdot [K(A[k]):K] \Bigr)\leq k^{2rg+4r^2}[K:\Q].$$ Thus, $r_k$ and $n_k$ grow at most polynomially. This shows that condition~\ref{thmmain::cond_poly} of Theorem~\ref{theorem:main_theorem_CM_families} is satisfied. 

As $\delta_k\#G_k=\# \Dk =\#\{\id_{L_k}\}=1$ we can take
$\gamma=0$. Estimating $\NN_k(x)$ will be slightly more difficult:

\begin{lemma}
  $\NN_k(x)$ satisfies $$\NN_k(x)\ll \frac{x^{3/2}}{k^3}+\frac{x}{k^2}$$ uniformly in $k$ and hence$$\sum_{y(x)\leq k\leq s(x)}\NN_k(x)\ll \frac{x^{3/2}}{y(x)^2}+\frac{x}{y(x)}.$$ for all functions $y$ with $y(x)\geq1$.
\end{lemma}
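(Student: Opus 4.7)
The plan is to translate membership in $\NN_k(x)$ into a divisibility condition on $\#\bar A(k(\fp))$ and then count the resulting primes by a Weil-based parameterization, following \cite{VirdolCyclicComp} (which in turn generalizes \cite{CojocaruMurtyElliptic}).

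Unfolding the definitions, $\fp \in \NN_k(x)$ means $\fp \notin S$, $\No\fp \leq x$, and $(\fp,L_q/K) \cap C_q = \emptyset$ for all $q \mid k$; with $C_q = G_q \setminus \{\id_{L_q}\}$ this is equivalent to $\fp$ splitting completely in each $L_q$, and hence in the compositum $L_k$. From $K(\zeta_k) \subset L_k$ I deduce $\No\fp \equiv 1 \pmod k$, and applying Lemma~\ref{lemma::VirdolArtinAV2} for each prime $q \mid k$ together with the coprimality of the $q$'s yields $(\Z/k\Z)^{2r} \hookrightarrow \bar A(k(\fp))/\langle\bar a_1,\ldots,\bar a_g\rangle$; in particular $k^{2r} \mid \#\bar A(k(\fp))$.

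Let $p$ be the rational prime below $\fp$ of residue degree $f$, so $\No\fp = p^f$. Primes of residue degree $f \geq 2$ with $\No\fp \leq x$ satisfy $p \leq \sqrt x$, and the constraint $p^f \equiv 1 \pmod k$ restricts these to $\ll \sqrt x/k$ via Brun--Titchmarsh; this is $\ll x^{3/2}/k^3$ throughout $k \leq \sqrt x$, so I may restrict to $f = 1$. For $f = 1$ the Weil estimate gives $\#\bar A(\F_p) = \prod_{i=1}^{2r}(1-\alpha_i)$ with $|\alpha_i| = \sqrt p$; since $\fp$ splits completely in $K(A[k])$, Frobenius acts trivially on $A[k]$, so each $\alpha_i \equiv 1 \pmod k$ in $\Z[\alpha_i]$. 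Hence the elementary symmetric polynomials $c_j = e_j(\alpha_1,\ldots,\alpha_{2r})$ appearing as coefficients of the Frobenius characteristic polynomial $P_\fp$ satisfy the congruences $c_j \equiv \binom{2r}{j} \pmod k$, the Weil bounds $|c_j| \leq \binom{2r}{j} p^{j/2}$, and the Weil functional equation $c_{2r-j} = p^{r-j} c_j$.

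I would then execute the counting as in Cojocaru--Murty: in dimension $r = 1$ the trace $a_p = c_1$ and the quotient $m = \#\bar A(\F_p)/k^2$ together determine $p$, and since the above restricts $a_p$ to $\ll \sqrt x/k$ values and $m \leq 4x/k^2$, one obtains $\NN_k(x) \ll x^{3/2}/k^3$. Virdol's extension to arbitrary $r$ uses the functional equation and the scalar relation $P_\fp(1) = k^{2r} m$ to collapse the $r$ free coefficients $c_1, \ldots, c_r$ back to the same count. The complementary term $x/k^2$ handles the regime $k \sim \sqrt x$ via the cruder estimate that $k^{2r} \leq \#\bar A(\F_p) \leq 2^{2r} p^r$ forces $p \geq (k/2)^2$, which combined with $p \equiv 1 \pmod k$ gives at most $O(x/k^2)$ admissible rational primes. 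The sum bound
$$\sum_{y(x) \leq k \leq s(x)} \NN_k(x) \ll \sum_{k \geq y(x)} \left(\frac{x^{3/2}}{k^3} + \frac{x}{k^2}\right) \ll \frac{x^{3/2}}{y(x)^2} + \frac{x}{y(x)}$$
then follows by comparison with integrals. The main obstacle is the uniform counting for $r \geq 2$: controlling the intermediate Frobenius coefficients $c_2,\ldots,c_{r-1}$ so as to preserve the elliptic-curve-style bound $O(x^{3/2}/k^3)$ demands a careful interplay between the mod-$k$ congruences, the Weil bounds, and the functional equation, on top of the divisibility $k^{2r} \mid P_\fp(1)$.
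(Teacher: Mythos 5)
Your proposal diverges from the paper's proof in a significant way, and two steps in it do not hold up. You set out to count by \emph{all} the Frobenius coefficients $c_1,\dots,c_r$, invoking the mod-$k$ congruences $c_j\equiv\binom{2r}{j}$, the Weil bounds, and the functional equation, and you then concede that the crucial step --- controlling the intermediate coefficients $c_2,\dots,c_{r-1}$ uniformly in $k$ so as to recover the $x^{3/2}/k^3$ bound for general $r$ --- is ``the main obstacle.'' That obstacle is not resolved in your argument, and it is precisely the point. The paper sidesteps it entirely by citing \cite[Lemma 2.6]{VirdolCyclicComp}, which provides the single scalar congruence $k^2\mid r\No\fp+r-a_\fp$ involving only the norm and the trace. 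Combined with $k\mid\No\fp-1$, this gives $k\mid a_\fp-2r$, and then one fixes the trace $a_\fp=a$ and counts primes by the divisibility $k^2\mid r\No\fp+r-a$ (each admissible value of $r\No\fp+r-a$ determines $\No\fp$, and there are at most $[K:\Q]$ primes with a given norm). This yields $\#S_a(k)\ll x/k^2$ uniformly, and summing over the $\ll\sqrt x/k$ residues $a\equiv 2r\pmod k$, with the $a=2r$ case separated out, gives $\NN_k(x)\ll x/k^2+x^{3/2}/k^3$. No residue-degree splitting, Brun--Titchmarsh bound, or multi-coefficient bookkeeping is needed.

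Your argument for the complementary term $x/k^2$ is also incorrect as written. From $k^{2r}\leq\#\bar A(\F_p)\leq 2^{2r}p^r$ you deduce $p\geq(k/2)^2$, and combining this with $p\equiv 1\pmod k$, $p\leq x$ you claim $O(x/k^2)$ admissible rational primes; but the number of integers in $[(k/2)^2,x]$ congruent to $1$ modulo $k$ is $\asymp x/k - k/4$, which for $k$ anywhere in the range $O(1)\ll k\ll\sqrt x$ is of order $x/k$, not $x/k^2$. The missing factor of $k$ is exactly what Virdol's $k^2$-congruence supplies: it is a constraint modulo $k^2$, not modulo $k$. (Your $r=1$ count by the pair $(a_p,m)$ with $m=P_\fp(1)/k^2$ is sound and does give $x^{3/2}/k^3$ for elliptic curves; it is the general-$r$ extension and the separate $x/k^2$ contribution that break down.) Once the pointwise bound $\NN_k(x)\ll x^{3/2}/k^3+x/k^2$ is in hand, your concluding summation by comparison with integrals is fine and matches the paper.
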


\begin{proof}
  In the first half of the proof we adapt \cite[pp.\ 431, 432]{VirdolCyclicComp}\footnote{The proof is similar as in \cite{VirdolCyclicComp} but there the author does not distinguish the cases $a_{\fp}=2r$ and $a_{\fp}\ne2r$ which is necessary and also done in \cite{CojocaruMurtyElliptic}, hence we present the complete argument here.} to our setting and write
\begin{align*}
  S_a(k)&=\{\fp\in\NN_k\mid a_\fp=a,\;\No\fp\leq x\}\\&=\{\fp\mid a_\fp=a,\;\fp\nmid lN,\;\fp\text{ splits completely in $L_k$, $\No\fp\leq x$}\}
\end{align*}
where $a_\fp$ is the trace of the Frobenius endomorphism of $\overline{A}$. Note that $|a_\fp|\leq 2r\No\fp^{1/2}$ by the Riemann hypothesis. Then $\{\fp\in \NN_k\mid \No\fp\leq x\}$ is the disjoint union $\bigcup_{\abs{a}\leq 2r\sqrt{x}} S_a(k)$. Thus
\begin{align*}
\NN_k(x)=\sum_{\abs{a}\leq 2r\sqrt{x}}\#S_a(k).
\end{align*}
If $\fp\in S_a(k)$ we obtain several divisibility relations which lead to upper bounds for $\# S_a(k)$. Firstly, we know, as $\fp$ splits completely in $L_k\supset K(\zeta_k)$, that the field $k(\fp)$ contains a primitive $k$-th root of unity, hence $k\mid \#k(\fp)-1=\No\fp-1$. Secondly, by \cite[Lemma 2.6]{VirdolCyclicComp} we have $k^2\mid r\No\fp+r-a_\fp$. Hence \begin{align} \label{eqsak1} S_a(k) & \subset\{\fp\mid \No\fp\leq x, \, a_\fp = a, \,k^2\mid r\No\fp+r-a_\fp\}.\end{align} Note that $|r\No\fp+r-a_\fp|\leq rx+r+2r\sqrt{x}=r(\sqrt{x}+1)^2\leq 4rx$. Since $r\No\fp+r-a_\fp \neq 0$ for all but possibly finitely many $\fp$ there are at most $\frac{4rx}{k^2}$ multiples of $k^2$ less than or equal to $4rx$ and for each such multiple there are at most $[K:\Q]$ primes of $K$ with corresponding norm. Therefore \eqref{eqsak1} implies $$\#S_a(k)\ll\frac{x}{k^2}.$$
We now adapt the strategy of
\cite{CojocaruMurtyElliptic}. We split up the sum
over the $a$ noting that the divisibility relations above also imply
that~$k\mid-(r\No\fp+r-a_\fp)+r(\No\fp-1)= a_\fp-2r$. Observing $| a_\fp - 2r | \leq 2r (\sqrt{x}+1)$ we get
\begin{align*}
\sum_{\abs{a}\leq 2r\sqrt{x}}\#S_a(k)&\leq \# S_{2r}(k) + \sum_{\substack{a\in\Z\setminus\{2r\}\\\abs{a}\leq 2r\sqrt{x}\\k\mid a-2r}} \# S_a(k)  \ll\frac{x}{k^2}+\sum_{\substack{a\in\Z\setminus\{2r\}\\\abs{a}\leq 2r\sqrt{x}\\k\mid a-2r}}\frac{x}{k^2} \ll\frac{x}{k^2}+\frac{\sqrt{x}}{k}\frac{x}{k^2}=\frac{x^{3/2}}{k^3}+\frac{x}{k^2}.\\[-3em]
\end{align*}
\end{proof}

The lemma shows that the condition~\ref{thmmain::cond_abgprime} of
Corollary~\ref{corollary:main_theorem_CM_families_cor} is satisfied with
the constants $\alpha=\frac{3}{2}$, $\beta=2$, $\alpha_1=1$, $\beta_1=1$.
Then \begin{align} \label{eqfinal1} x^{\frac{1}{2}+(\alpha-\frac{1}{2})\frac{1+\gamma}{\beta+\gamma+1}}
(\log x)^{1-\frac{1+\gamma}{\beta+\gamma+1}} = x^{5/6} (\log x)^{2/3}.\end{align}
Using the notation from Section \ref{section:general_ideas} we set $h(x):=\sum_{k>x}\frac{\mu(k)}{\#G_k}$. By \cite[Lemma 2.2]{VirdolCyclicComp} we have $\#G_k\gg k^{3/2}$, thus $|h(y(x))|\ll y(x)^{-1/2}$. This shows that condition~\ref{thmmain::cond_conv} of Theorem~\ref{theorem:main_theorem_CM_families} is satisfied. Also \begin{align} \label{eqfinal2} |h(y(x))| \li(x) \ll y(x)^{-1/2} \li(x) \sim  \left( \bigl(\frac{x^{\alpha-\frac{1}{2}}}{\log x}\bigr)^{(\beta+\gamma+1)^{-1}} \right)^{-1/2} \frac{x}{\log(x)} = \left( \frac{x}{\log x} \right)^{5/6},\end{align} and \eqref{eqfinal2} is dominated by \eqref{eqfinal1}.
Combining all this allows us to apply Theorem~\ref{theorem:main_theorem_CM_families_extra_condition}
which proves Theorem \ref{theorem:av_main_theorem2_extra_condition}.

\printbibliography

\end{document}